\def\R{\Bbb{R}}
\def\Z{\Bbb{Z}}
\def\M{\mathcal{M}}
\def\S{\mathbb{S}}
\newtheorem{theorem}{Theorem}[section]
\newtheorem{prop}[theorem]{Proposition}
\theoremstyle{definition}
\newtheorem{definition}[theorem]{Definition}
\newtheorem{remark}[theorem]{Remark}
\def\today{{\number\day\space
 \ifcase\month\or
  January\or February\or March\or April\or May\or June\or
  July\or August\or September\or October\or November\or December\fi
 \space\number\year}}
\title[Weighted Divergent Beam and Cone Transforms] 
      {Inversion of Weighted Divergent Beam and Cone Transforms}
\author[Peter Kuchment and Fatma Terzioglu]{}
\subjclass{44A12, 53C65, 92C55.}
 \keywords{weighted cone transform, weighted divergent beam transform, inversion, Compton camera, imaging, Radon transform, integral geometry.}
 \email{kuchment@math.tamu.edu}
 \email{fatma@math.tamu.edu}
\thanks{This work was supported in part by NSF DMS grant 1211463.}
\thanks{$^*$ Corresponding author: Fatma Terzioglu, e-mail: fatma@math.tamu.edu}
\begin{document}
\maketitle

\centerline{\scshape Peter Kuchment}
\medskip
{\footnotesize
 \centerline{ Department of Mathematics}
   \centerline{Texas A$\&$M University}
   \centerline{College Station, TX 77843-3368, USA}
} 

\medskip

\centerline{\scshape Fatma Terzioglu$^*$}
\medskip
{\footnotesize
 \centerline{ Department of Mathematics}
   \centerline{Texas A$\&$M University}
   \centerline{College Station, TX 77843-3368, USA}
}

\bigskip


\begin{abstract}
In this paper, we investigate the relations between the Radon and weighted divergent beam and cone transforms. Novel inversion formulas are derived for the latter two. The weighted cone transform arises, for instance, in image reconstruction from the data obtained by Compton cameras, which have promising applications in various fields, including biomedical and homeland security imaging and gamma ray astronomy. The inversion formulas are applicable for a wide variety of detector geometries in any dimension. The results of numerical implementation of some of the formulas in dimensions two and three are also provided.
\end{abstract}

\section{Introduction}
In this paper, we focus mainly on analytic and numerical inversion of an integral transform (\emph{cone} or \emph{Compton} transform) that maps a function to its integrals over conical surfaces with a weight equal to some power of the distance from the cone's vertex. It arises in various imaging techniques, most prominently, in modeling of the data provided by the so-called Compton camera, which has novel applications in various fields including medical and industrial imaging, homeland security, and gamma ray astronomy \cite{Basko,Todd,Singh,ADHKK,ACCHKOR,XMCK}. In Compton camera setting, the vertices of the cones correspond to the locations of the detection sites on the scattering detector. More information on the working principle of a Compton camera can be found, for example, in \cite{ADHKK,Basko,Terzioglu,Todd,Singh}.

It has been mentioned in various papers, e.g. \cite{Basko,Smith,Maxim} that, depending upon the engineering of the detector, various power weights can appear in the surface integral. However, more work needs to be done to determine the weight factor that accurately represents the projections obtained from a Compton camera. Several works, e.g. \cite[and references therein]{Hristova,Hristova2015,Amb2,Amb3,Basko,Smith,Cree,Gouia-Zarrad-Ambarts,Truong,NgTr2005,Terzioglu,KuchTer,Florescu} concentrated on the case of pure surface measure on the cone. Here, we consider a weight that is equal to some power of the distance to the vertex (detection site). An alternative inversion formula for such transform that assumes the vertices of the cones are located on a given straight line is provided in \cite{Moon}. A reconstruction formula for such transform defined on the cones having vertices on a hyperplane and a central axis orthogonal to this hyperplane is derived in \cite{Haltmeier}. In comparison, the formulas we derive allow for a wide variety of cone vertex (a.k.a. detector or source) geometries, which do not allow for harmonic analysis, but satisfy what we call in this paper Tuy's condition (Definition \ref{D:Tuy}). 

A closely intertwined with the weighted cone transform is what is called \emph{weighted divergent beam transform}, which integrates a function over rays with a weight equal to some power of the distance to the starting point (source) of the ray. We thus study it in some details, which leads eventually to the desired weighted cone transform inversions. When the weight factor is not present, this is the well studied and important for the $3D$ X-ray CT divergent (or cone) beam transform (see e.g. \cite[and references therein]{Grangeat, Hamaker, SmithConeBeam, Gelfand-Goncharov, Tuy, Katsevich2002, Katsevich2004, NattWubb}).

In order to avoid being distracted from the main purpose of this text, we assume that the functions in question belong to the Schwartz space $\mathcal{S}$ of smooth fast decaying functions. This allows us to skip discussions of applicability of various transforms. However, as in the case of Radon transform (see, e.g. \cite{Natt_old,KuchCBMS}), the results have a much wider area of applicability, since the derived formulas can be extended by continuity (although we do not do this in the current text) to some wider functional spaces. This is confirmed, in particular, by our successful numerical implementations for discontinuous (piecewise continuous) phantoms. The issues of appropriate functional spaces will be addressed elsewhere. 

We also adopt the standard abuse of notations, writing the action of a distribution $T$ on a test function $\varphi$, $\langle T,\varphi \rangle$, as $\textstyle\int T(x)\varphi(x)dx$.

The paper is organized as follows. In section \ref{S:Weighted}, we define the weighted divergent beam and cone transforms, and describe a simple relation between them. In section \ref{S:InvWeighted}, we present a variety of inversion formulas for the weighted divergent beam transform (Theorems \ref{T:D-sub} and \ref{T:D-sub-even}). We then derive another integral relation between the weighted divergent beam and cone transforms, which leads to new inversion formulas for the $n$-dimensional weighted cone transform (Theorem \ref{T:C-sub}). In section \ref{S:InvRadon}, we investigate the relation between the Radon and weighted divergent beam and cone transforms. This enables us to derive other inversion formulas for the latter two (Theorem \ref{T:InviaRadon}). Section \ref{S:numerics} contains the results of numerical implementation of some of the inversion formulas for the weighted cone transform in dimensions two and three for two different vertex geometries, as well as examples of numerical inversion of two weighted divergent beam transforms in dimension three. Conclusions and remarks can be found in section \ref{S:remarks}, followed by the acknowledgments section.

\section{Weighted Divergent Beam and Cone Transforms}\label{S:Weighted}
In this section, we define the closely related weighted divergent beam and cone transforms.
 \begin{definition}
 For $k>-1$, the \emph{$k$-weighted divergent beam transform} of a function $f \in \mathcal{S}(\R^n)$ is defined by
 \begin{align}\label{Div}
D^k f(u,\sigma) = D_u^kf(\sigma): = \int_0^\infty f(u+\rho \sigma) \rho^k d\rho,
\end{align}
where $u\in\R^n$ is the \emph{source} of the beam $\{u+\rho \sigma\}|_{ \rho \geq 0}$ and $\sigma\in\S^{n-1}$ is the unit vector in the direction of the beam.
 \end{definition}

Consider now a circular cone\footnote{The word ``cone'' in this paper always means a surface, rather than solid cone.} $\mathfrak{S}$ in $\mathbb{R}^n$. Its surface can be parametrized by a triple $(u,\beta,\psi)$, where $u \in \mathbb{R}^n$ is the cone's \emph{vertex (apex)}\footnote{In the Compton camera imaging, cone's vertex corresponds to a detection location.}, the unit vector $\beta \in \S^{n-1}$ is directed toward cone's interior along the cone's axis, and $\psi \in (0,\pi)$ is the opening angle (see Fig. \ref{fig:camera&scatter}).
 \begin{figure}[ht]
\begin{center}
        \includegraphics[width=0.3\textwidth]{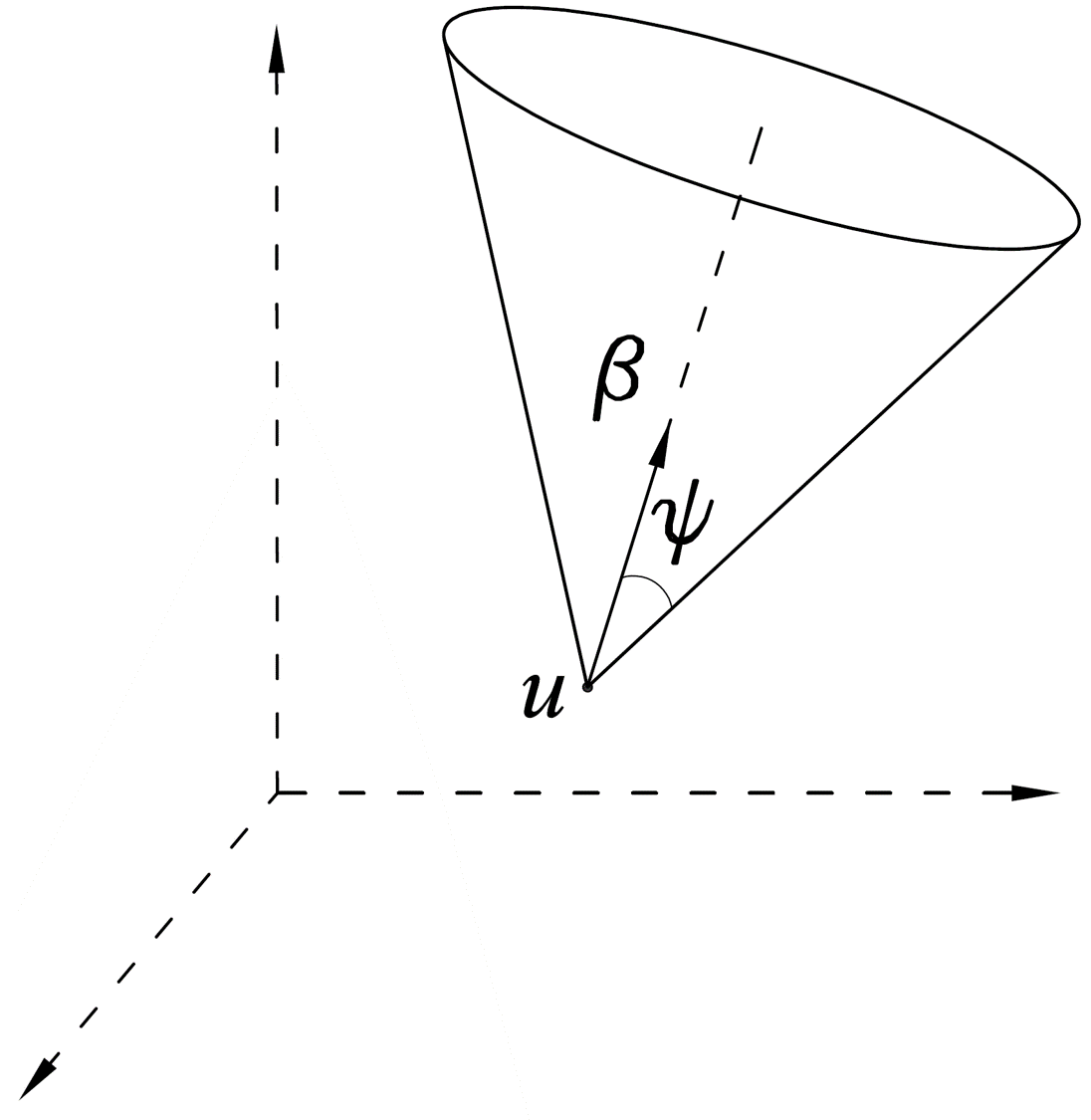}
        \caption{A cone with vertex $u \in \R^n$, central axis direction vector $\beta \in \S^{n-1}$ and opening angle $\psi \in (0,\pi)$.}
       \label{fig:camera&scatter}
\end{center}
\end{figure}
A point  $x \in \R^n$ lies on $\mathfrak{S}(u,\beta,\psi)$ iff $(x-u)\cdot\beta=|x-u| \cos \psi.$
\begin{definition}
Let $k \in \mathbb{Z}_{+}=\{0,1,2,...\}$\footnote{At this step, one can allow all real values $k>-1$, while later on in sections \ref{S:InvWeighted} and  \ref{S:InvRadon}, $k \in \mathbb{Z}_{+}:=\{0,1,2,\dots\}$ will be important.}, and suppose that $f \in \mathcal{S}(\R^n)$. We define \emph{the $k$-weighted cone transform} $C^k$ of $f$ as
\begin{align}\label{cone trans nodelta}
C^k f(u,\beta,\psi): = \int_{\mathfrak{S}(u,\beta,\psi)}f(x)|x-u|^{k-n+2} dS(x),
\end{align}
where $dS$ is the surface measure on the cone $\mathfrak{S}$.
In other words,
\begin{align} \label{cone trans delta}
 C^k f(u,\beta,\psi) =  \sin \psi \int_{\R^n} f(x) \delta((x-u)\cdot\beta-|x-u| \cos \psi)|x-u|^{k-n+2} dx,
 \end{align}
 where $dx$ is the Lebesgue measure on $\R^n$.
 \end{definition}

 \begin{remark}
 We note that $k=n-2$ corresponds to the case of pure surface measure on the cone.
 \end{remark}

\begin{remark}
\textbf{We will say just ``weighted" cone or divergent beam transform}, when no confusion about the value of $k$ can arise.\end{remark}

Changing variables in (\ref{cone trans delta}) as $x=u+\rho \sigma$ for $\rho \in [0,\infty)$ and $\sigma \in \S^{n-1}$, and using the fact that $\delta$ is homogeneous of degree $-1$, we make the following simple observation:
\begin{prop}
\begin{align}\label{ConeFan}
 C^k f(u,\beta,\psi) &=  \sin \psi\int_{\S^{n-1}} \int_0^\infty f(u+\rho \sigma)\rho^k d \rho \; \delta(\sigma \cdot\beta-\cos \psi) d \sigma \nonumber\\
 & = \sin \psi \int_{\S^{n-1}} D_u^kf(\sigma) \delta(\sigma \cdot\beta-\cos \psi) d \sigma.
 \end{align}

By letting $t=\cos \psi$, we can rewrite (with an abuse of notation) $C^kf$ as
\begin{align}\label{Ct}
C^k f(u,\beta,t):=
\begin{cases}
   \sqrt{1-t^2} \int_{\S^{n-1}} D_u^kf(\sigma) \delta(\sigma \cdot\beta-t) d \sigma,  &  \mbox{if } |t| \leq 1 \\
    0, & \mbox{ otherwise}.\\
\end{cases}
\end{align}
\end{prop}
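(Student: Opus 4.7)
The plan is to carry out the change of variables already suggested in the text, starting from the delta-function form \eqref{cone trans delta} of $C^kf$, and then to recognize $D_u^kf(\sigma)$ inside the resulting integral.

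First, I would write
\[
 C^k f(u,\beta,\psi) =  \sin \psi \int_{\R^n} f(x)\, \delta\bigl((x-u)\cdot\beta-|x-u|\cos \psi\bigr)\,|x-u|^{k-n+2}\,dx
\]
and pass to spherical coordinates centered at the apex $u$: set $x=u+\rho\sigma$ with $\rho\in[0,\infty)$ and $\sigma\in\S^{n-1}$, so that $dx=\rho^{n-1}\,d\rho\,d\sigma$, $|x-u|=\rho$ and $(x-u)\cdot\beta=\rho\,\sigma\cdot\beta$. Substituting, the weight $|x-u|^{k-n+2}$ combines with the Jacobian $\rho^{n-1}$ to produce $\rho^{k+1}$.

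Next, I would exploit the homogeneity of the Dirac delta of degree $-1$: for $\rho>0$,
\[
\delta\bigl(\rho(\sigma\cdot\beta-\cos\psi)\bigr)=\rho^{-1}\,\delta(\sigma\cdot\beta-\cos\psi),
\]
which absorbs one factor of $\rho$ and leaves $\rho^{k}$. Since the delta factor no longer depends on $\rho$, Fubini separates the two integrations, yielding
\[
 C^k f(u,\beta,\psi) = \sin \psi \int_{\S^{n-1}}\!\!\left(\int_0^\infty f(u+\rho\sigma)\rho^k\,d\rho\right)\delta(\sigma\cdot\beta-\cos\psi)\,d\sigma,
\]
and the inner integral is precisely $D_u^k f(\sigma)$ by \eqref{Div}. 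This gives \eqref{ConeFan}. The restatement \eqref{Ct} in terms of $t=\cos\psi$ is immediate, with the vanishing for $|t|>1$ reflecting that the delta function has no support on $\S^{n-1}$ in that range, i.e.\ no admissible opening angle exists.

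The computation is essentially routine; the only subtle step is the use of the homogeneity of $\delta$, which is justified in the distributional sense once one pairs $C^kf$ against a test function in $(u,\beta,\psi)$ (equivalently, in the Schwartz-class setting of the text, the substitution is a legitimate manipulation of the oscillatory/distributional integral). I do not anticipate any real obstacle beyond keeping track of the exponents in $\rho$.
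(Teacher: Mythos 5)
Your proposal is correct and follows exactly the paper's own (one-line) argument: change to spherical coordinates $x=u+\rho\sigma$ centered at the vertex and use the degree $-1$ homogeneity of $\delta$ to reduce $\rho^{k+1}$ to $\rho^{k}$, after which the inner $\rho$-integral is $D_u^kf(\sigma)$ by definition. The bookkeeping of exponents and the remark on the support of \eqref{Ct} for $|t|>1$ are both accurate, so there is nothing to add.
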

\section{Inversion of Weighted Divergent Beam and Cone Transforms}\label{S:InvWeighted}
In this section, we present a variety of inversion formulas for the weighted divergent beam transform. We then derive another integral relation between the weighted divergent beam and cone transforms, which enables us to develop new inversion formulas for the $n$-dimensional weighted cone transform.
\subsection{Inversion of the Weighted Divergent Beam Transform}
If $f \in \mathcal{S}(\R^n)$, for each $u \in \R^n$, $D_u^k f(\sigma)$ can be uniquely extended to a smooth function on $\R^n \backslash \{0\}$ homogeneous of degree $-(k+1)$:
\begin{align}\label{Dext}
D_u^k f(x) = \frac{1}{|x|^{k+1}} D_u^k f(\frac{x}{|x|}).
\end{align}
This function is locally integrable with respect to $x\in\R^n$, provided  $k<n-1$, and has a well-defined Fourier transform as a tempered distribution (see e.g. \cite{Gelfand}), i.e., for each $\varphi \in \mathcal{S}({\R^n})$,
\begin{align}\label{FDext}
\langle \widehat{D_u^kf}(\xi), \varphi (\xi) \rangle = \int_{\R^n} D_u^k f(y) \hat{\varphi}(y) dy.
\end{align}

In the following, we derive inversion formulas for the divergent beam transform that are analogs of the well known \cite{Tuy,NattWubb} Tuy's inversion formula, which addresses the case when $k=0$ in dimension three, and the sources (detectors in the Compton camera case) move along a curve.

\begin{definition}In the rest of the paper, the shorthand notations $\partial_{u_j}$ and $\partial_u$ will be used for the partial derivatives $\partial/\partial u_j$ and gradient $\nabla_u$ with respect to the variables $u$.
\end{definition}

\begin{theorem}\label{T:IDT}
Let $f \in \mathcal{S}(\R^n)$, and all source locations $u$ are accessible. Then,
\begin{align}\label{IDT}
f(x)= \frac{(-i)^{k+1}}{(2\pi)^n} \int_{\S^{n-1}} \left(\Delta_u^{(k+1)/2} \widehat{D_u^kf}(\theta)\right)\big|_{u=x} d\theta,
\end{align}
where $\Delta_u:=\textstyle\sum_j \partial_{u_j}^2$ is the Laplace operator with respect to the variable $u$, and its power when $k$ is not an odd integer is understood as the corresponding Riesz potential (see, e.g. \cite{Natt_old}).
\end{theorem}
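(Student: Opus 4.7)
The plan is a direct Fourier-analytic computation in the variable $x$. First, I would observe that the homogeneous extension (\ref{Dext}) can be rewritten (via the substitution $s = t|x|$ in the original integral) as
\begin{equation*}
D_u^k f(x) = \int_0^\infty f(u+tx)\, t^k\, dt, \qquad x \neq 0,
\end{equation*}
and then compute its Fourier transform in $x$. Swapping the $dx$ and $dt$ integrals, substituting $y = u + tx$, and then $r = 1/t$, one arrives at the key intermediate identity
\begin{equation*}
\widehat{D_u^k f}(\xi) = \int_0^\infty r^{\,n-k-2}\, \hat f(r\xi)\, e^{iru\cdot\xi}\, dr,
\end{equation*}
interpreted as a tempered distribution in $\xi$ (the decay of $\hat f$ handles large $r$, and the singularity at $r=0$ when $k\ge n-1$ is handled distributionally as discussed below).

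Restricting to $\xi = \theta \in \mathbb{S}^{n-1}$ and applying $\Delta_u^{(k+1)/2}$, each $\Delta_u$ acting on $e^{iru\cdot\theta}$ produces a factor $-r^2$; using the principal branch, $(-r^2)^{(k+1)/2} = i^{k+1} r^{k+1}$, which is consistent with the Riesz-potential convention invoked in the statement for even $k$. Therefore
\begin{equation*}
\bigl(\Delta_u^{(k+1)/2}\widehat{D_u^k f}(\theta)\bigr)\big|_{u=x} = i^{k+1}\int_0^\infty \hat f(r\theta)\, e^{irx\cdot\theta}\, r^{n-1}\, dr.
\end{equation*}
Integrating over $\theta \in \mathbb{S}^{n-1}$ and recognizing the polar-coordinates form of the Fourier inversion formula, the right-hand side collapses to $i^{k+1}(2\pi)^n f(x)$. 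Dividing through and using $1/i^{k+1} = (-i)^{k+1}$ yields (\ref{IDT}).

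The main obstacle is not the algebra but the justification of the distributional manipulations. Since the extension $D_u^k f(x)$ fails to be locally integrable near $x=0$ when $k\ge n-1$, the Fourier-transform identity must be established by pairing with a Schwartz test function in $\xi$ and applying Fubini after a standard regularization (for instance, subtracting the first few Taylor terms of $\hat f(r\theta)$ at $r=0$). For even $k$, I would also verify that the Riesz fractional power $\Delta_u^{(k+1)/2}$ acts on $e^{iru\cdot\theta}$ precisely through its Fourier multiplier $(-r^2)^{(k+1)/2}$, which is routine once the operator is defined via its symbol. These caveats aside, the computation above then delivers (\ref{IDT}) for every $k > -1$ in the distributional sense explained in the statement.
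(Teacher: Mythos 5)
Your proposal is correct and follows essentially the same route as the paper: both hinge on the identity $\widehat{D_u^kf}(\theta)=\int_0^\infty e^{i\rho\theta\cdot u}\hat f(\rho\theta)\rho^{n-k-2}d\rho$, the multiplier relation $\Delta_u^{(k+1)/2}e^{i\rho\theta\cdot u}=(i\rho)^{k+1}e^{i\rho\theta\cdot u}$, and comparison with the polar form of Fourier inversion. The only cosmetic difference is that you derive the key identity by a direct change of variables on the homogeneous extension (noting the distributional caveats for $k\ge n-1$), whereas the paper performs the equivalent computation after pairing with a Schwartz test function.
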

\begin{proof}
 Let $f \in \mathcal{S}(\R^n)$. For any $\theta \in \S^{n-1}$, the Fourier transform of $D_u^kf$ satisfies
\begin{align}\label{DFT}
\widehat{D_u^kf}(\theta)= \int_0^\infty e^{i\rho \theta \cdot u} \hat{f}(\rho \theta) \rho^{n-k-2} d \rho.
\end{align}
Indeed, for any $\varphi \in \mathcal{S}({\R^n})$, due to $D_u^kf$ being homogeneous of degree $-(k+1)$ and changing to polar variables $y=s\omega$, we have
\begin{align*}
\langle \widehat{D_u^kf}, \varphi \rangle
& = \langle D_u^kf, \hat{\varphi} \rangle  = \int_{\R^n} D_u^k f(y) \hat{\varphi}(y) dy\\
 &= \int_{\S^{n-1}} \int_0^\infty  D_u^k f(\omega) \hat{\varphi}(s\omega) s^{n-k-2}dsd\omega \\
& = \int_{\R^n} \varphi(x) \int_{\S^{n-1}} \int_0^\infty   \int_0^\infty e^{-i x \cdot s\omega}  f(u+r \omega) s^{n-k-2}ds \; r^k dr d\omega  \; dx.
\end{align*}
Now, changing variables in $s$ to $\rho = s/r$, and then letting $y=u+r\omega$, we get
\begin{align*}
\langle \widehat{D_u^kf}, \varphi \rangle
&=\int_{\R^n} \varphi(x) \int_0^\infty \int_{\S^{n-1}} \int_0^\infty  e^{-i x \cdot r \rho \omega}   f(u+r \omega) r^{n-1} dr d\omega \rho^{n-k-2} d\rho \; dx\\
&= \int_{\R^n} \varphi(x) \int_0^\infty \int_{\R^n} e^{-i \rho x \cdot (y-u)}   f(y) dy \rho^{n-k-2} d\rho \; dx,
\end{align*}
which implies \eqref{DFT}.

The following simple formula holds for any unit vector $\theta$:
\begin{equation}\label{E:LaplExp}
\Delta_u^{(k+1)/2}e^{i\rho\theta\cdot u}=(i\rho)^{k+1}e^{i\rho\theta\cdot u}.
\end{equation}
Thus, applying (k+1)/2-th power of the Laplace operator with respect to $u$ to \eqref{DFT}, we obtain
\begin{align}\label{E:LaplDBT}
\Delta_u^{(k+1)/2} \widehat{D_u^kf}(\theta)= i^{k+1} \int_0^\infty e^{i\rho \theta \cdot u} \hat{f}(\rho \theta) \rho^{n-1} d \rho.
\end{align}
Now, recalling the Fourier inversion formula in polar coordinates
\begin{align}\label{IFFT}
f(x)= \frac{1}{(2\pi)^{n}} \int_{\S^{n-1}} \int_0^\infty e^{i\rho \theta \cdot x} \hat{f}(\rho \theta) \rho^{n-1} d \rho d\theta
\end{align}
and comparing with \eqref{E:LaplDBT}, we obtain the desired formula
$$f(x)= \frac{(-i)^{k+1}}{(2\pi)^n} \int_{\S^{n-1}} \left(\Delta_u^{(k+1)/2} \widehat{D_u^kf}(\theta)\right)\big|_{u=x} d\theta.$$
\end{proof}
\begin{remark}\label{R:defic_beam}
Considering formula (\ref{IDT}), one realizes quickly that it is not very useful, since it requires ``sources'' $u$ of the beams to be available throughout the whole space. In the Compton camera case, as well as in $3D$ CT, this would require detectors/sources to be placed throughout the object imaged, which is impossible.

Moreover, in this case, one deals with just a deconvolution problem, and a severely overdetermined one at that (the dimension of the data used is $2n-1$ instead of $n$). Thus, there must exist formulas requiring much less data, in particular allowing the detectors $u$ to be situated only outside the object being imaged (e.g. Tuy's formula only requires an arc of external sources).

This is also related to the interesting question about ``admissible'' complexes of cones that provide enough data for stable reconstruction. We have already briefly addressed this issue in \cite{Terzioglu,KuchTer} and plan to have more detailed discussion elsewhere.

Here we show an example of how such deficiency can be alleviated for the weighted divergent beam transform.
\end{remark}
\begin{definition}\label{D:Tuy}\indent
\begin{itemize}
\item Let $\M\subset\R^n$ be a smooth $d$-dimensional submanifold. We will say that it satisfies the \textbf{Tuy's condition} with respect to a subset $V\subset \R^n$, if any hyperplane intersecting $V$ has a non-tangential intersection with $\M$.

Equivalently: for any $x\in V$ and unit vector $\theta \in \S^{n-1}$, there exists a point $u\in\M$ such that $\theta \cdot x = \theta \cdot u$, and $\theta$ is not normal to $\M$ at the point $u$.
\item We denote by $P_u$ the orthogonal projection onto the tangent space to $\M$ at the point $u\in\M$.
\end{itemize}
\end{definition}

\begin{remark}
Notice that the above condition is a strengthened version of what was called \textbf{admissibility condition} in \cite{KuchTer,Terzioglu}.
\end{remark}

\begin{theorem}\label{T:D-sub}
Let $k$ be an odd natural number and $\M\subset\R^n$ satisfies Tuy's condition with respect to a compact $V$. Then, for any homogeneous linear elliptic differential operator $L(u,\partial_u)$ of order $k+1$ on $\M$ and any smooth function $f$ supported in $V$, the following inversion formula holds:
 \begin{align}\label{D-sub}
 f(x)=\frac{1}{(2\pi)^n}\int_{\S^{n-1}}\frac{1}{L(u,P_u\theta)}L(u,\partial_u) \widehat{D_u^kf}(\theta)d\theta,
 \end{align}
 where $u\in\M$ is related to $x$ and $\theta$ as in the Tuy's condition.

 If $\M$ is one-dimensional, then $k$ can be any natural number (in this case, when $k=0$, one ends up with the standard Tuy's formula).
\end{theorem}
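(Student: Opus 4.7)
The plan is to mimic the argument of Theorem~\ref{T:IDT}, but replace the global differential operator $\Delta_u^{(k+1)/2}$ by the tangential operator $L(u,\partial_u)$ defined on $\M$, and then invoke Tuy's condition to trade the source point $u$ for the reconstruction point $x$ inside the exponential phase. The inversion formula \eqref{IDT} is severely overdetermined and requires sources throughout $\R^n$; the Tuy hypothesis is exactly what lets one restrict the integration variable $u$ to the lower-dimensional manifold $\M$ while still recovering $f$.

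Starting from the Fourier representation \eqref{DFT},
\begin{equation*}
\widehat{D_u^kf}(\theta) = \int_0^\infty e^{i\rho\theta\cdot u}\hat f(\rho\theta)\rho^{n-k-2}\,d\rho,
\end{equation*}
I would apply $L(u,\partial_u)$ under the integral sign. Because $L$ only differentiates along directions tangent to $\M$, every tangential derivative acting on $e^{i\rho\theta\cdot u}$ pulls down $i\rho\,(v\cdot\theta)=i\rho\,(v\cdot P_u\theta)$ for $v\in T_u\M$. Iterating $k+1$ times and using the homogeneity of $L$, I obtain the analog of \eqref{E:LaplExp},
\begin{equation*}
L(u,\partial_u)\,e^{i\rho\theta\cdot u} = (i\rho)^{k+1}L(u,P_u\theta)\,e^{i\rho\theta\cdot u},
\end{equation*}
which is the one computation that drives everything. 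Now, by Tuy's condition, for each $x\in V$ and $\theta\in\S^{n-1}$ I may select $u=u(x,\theta)\in\M$ with $\theta\cdot u=\theta\cdot x$ and $\theta$ not normal to $\M$ at $u$, so that $P_u\theta\neq 0$; ellipticity of $L$ then forces $L(u,P_u\theta)\neq 0$, and division yields
\begin{equation*}
\frac{1}{L(u,P_u\theta)}L(u,\partial_u)\widehat{D_u^kf}(\theta) = i^{k+1}\int_0^\infty e^{i\rho\theta\cdot x}\hat f(\rho\theta)\rho^{n-1}\,d\rho.
\end{equation*}
Integrating over $\theta\in\S^{n-1}$ and invoking the polar Fourier inversion formula \eqref{IFFT} produces \eqref{D-sub} (modulo the $i^{k+1}$ factor already implicit in the symbol convention used for $L$ in \eqref{IDT}). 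The one-dimensional case is distinguished because on a curve the principal symbol of $L$ reduces to a single monomial $a(u)\xi^{k+1}$, nonzero for $\xi\neq 0$ regardless of the parity of $k+1$; ellipticity then imposes no parity constraint, recovering Tuy's classical case at $k=0$.

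The main obstacle I foresee is not the Fourier calculation but the measurable/smooth selection of the point $u=u(x,\theta)$ guaranteed by Tuy's condition. Uniqueness need not hold in general, so one must either cover $\S^{n-1}$ by pieces on which a local smooth branch of $u(x,\cdot)$ is defined, or verify that the integrand in \eqref{D-sub} is independent of the choice; one must also check that $|L(u,P_u\theta)|^{-1}$ remains integrable as the intersection with $\M$ degenerates to a tangency. Everything else is essentially identical in spirit to the proof of Theorem~\ref{T:IDT}.
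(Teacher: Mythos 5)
Your proposal follows essentially the same route as the paper's own (very terse) proof: repeat the argument of Theorem~\ref{T:IDT} with the symbol identity $L(u,\partial_u)e^{i\rho\theta\cdot u}=(i\rho)^{k+1}L(u,P_u\theta)e^{i\rho\theta\cdot u}$ in place of \eqref{E:LaplExp}, use Tuy's condition plus ellipticity to justify dividing by the nonvanishing $L(u,P_u\theta)$ and to replace $\theta\cdot u$ by $\theta\cdot x$, and finish with the polar Fourier inversion \eqref{IFFT}. Your closing remarks about the constant $i^{k+1}=(-1)^{(k+1)/2}$ and about the smooth/measurable selection of $u(x,\theta)$ flag genuine loose ends, but they are loose ends in the paper's own argument as well, not gaps introduced by your approach.
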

\begin{remark}
Notice that $u\in\M$ in (\ref{D-sub}) depends on both $x$ and $\theta$ and that $L(u,P_u\theta)$ does not vanish if the Tuy's condition is satisfied.
\end{remark}
\begin{proof}
The proof follows exactly the one of Theorem \ref{T:IDT}, using at the end the formula for the symbol of a homogeneous differential operator of order $k+1$ (instead of a power of the Laplacian in (\ref{E:LaplExp})):
\begin{equation}\label{E:symbolL}
L(u,\partial_u)e^{i\rho\theta\cdot u}=(i\rho)^{k+1}L(u,P_u\theta)e^{i\rho\theta\cdot u}
\end{equation}
and noticing that the factor $L(u,P_u\theta)$ does not vanish, due to the ellipticity and homogeneity of the operator and the Tuy's condition.
\end{proof}

A serious deficiency in Theorem \ref{T:D-sub} is that, unless $\M$ is one-dimensional, only odd values of $k$ are allowed. This issue can be resolved, paying the price of having a more complex formula.

Indeed, consider the following first order linear differential operator acting tangentially to $\M$, with coefficients depending upon $u\in\M$ and $\theta\in\S^{n-1}$:
\begin{equation}\label{E:OperatorO}
O(u,\theta,\partial_u) := P_u\theta \cdot \partial_u.
\end{equation}
Let also $L(u,\partial_u)$ be an operator like in Theorem \ref{T:D-sub}, but of order $k$.
Applying the composition $O\circ (aL)$ to the exponential $e^{i\rho\theta\cdot u}$, where $a:=1/L(u,P_u\theta)$, and using (\ref{E:symbolL}), we get
\begin{equation}\label{E:compose}
O\circ (aL)e^{i\rho\theta\cdot u}=|P_u\theta|^2(i\rho)^{k+1}e^{i\rho\theta\cdot u}.
\end{equation}
Since the order of the composition is odd, this enables us to extended the inversion formula to the even values of $k$:

\begin{theorem}\label{T:D-sub-even}
Let $k$ be an even natural number and $\M\subset\R^n$ satisfies Tuy's condition with respect to a compact $V$. Then, for any homogeneous elliptic differential operator $L(u,\partial_u)$ of order $k$ on $\M$ and any smooth function $f$ supported in $V$, the following inversion formula holds:
 \begin{align}\label{D-sub-even}
 f(x)=\frac{1}{(2\pi)^n}\int_{\S^{n-1}}\frac{1}{|P_u\theta|^2}\big(O\circ \frac{1}{L(u,P_u\theta)}L\big) \widehat{D_u^kf}(\theta) d\theta,
 \end{align}
 where $u$ is related to $x$ and $\theta$ as in the Tuy's condition.
\end{theorem}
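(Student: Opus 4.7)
The plan is to mimic the proof of Theorem \ref{T:D-sub} almost verbatim, simply replacing the single elliptic operator $L(u,\partial_u)$ of order $k+1$ by the composition $O\circ(aL)$ of total order $k+1$, where $a:=1/L(u,P_u\theta)$ and $O$ is the tangential first-order operator defined in \eqref{E:OperatorO}. The trick the authors have already packaged in \eqref{E:compose} is that, although $k$ itself is now even, the composition is of \emph{odd} order $k+1$, so we can still produce the factor $(i\rho)^{k+1}$ that was crucial in the odd case.

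First, I would recall from the proof of Theorem \ref{T:IDT} the representation
\begin{equation*}
\widehat{D_u^kf}(\theta) = \int_0^\infty e^{i\rho\theta\cdot u}\,\hat f(\rho\theta)\,\rho^{n-k-2}\,d\rho,
\end{equation*}
which depends on $u$ only through the exponential $e^{i\rho\theta\cdot u}$. Next, I would apply the composition $O\circ(aL)$ in $u$ under the integral sign. Since $\hat f(\rho\theta)$ is independent of $u$, the action of $O\circ(aL)$ is transferred to $e^{i\rho\theta\cdot u}$, and \eqref{E:compose} yields
\begin{equation*}
\bigl(O\circ(aL)\bigr)\widehat{D_u^kf}(\theta) = i^{k+1}\,|P_u\theta|^2 \int_0^\infty e^{i\rho\theta\cdot u}\,\hat f(\rho\theta)\,\rho^{n-1}\,d\rho.
\end{equation*}
Dividing by $|P_u\theta|^2$ removes the extra factor and produces exactly the integrand appearing in the polar Fourier inversion formula \eqref{IFFT}.

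Now, for each $x\in V$ and each $\theta\in\S^{n-1}$, Tuy's condition supplies a point $u=u(x,\theta)\in\M$ with $\theta\cdot u=\theta\cdot x$ and $\theta$ not normal to $\M$ at $u$. The first relation lets us replace $e^{i\rho\theta\cdot u}$ by $e^{i\rho\theta\cdot x}$ after evaluation at this $u$; the second ensures that $P_u\theta\ne 0$, so dividing by $|P_u\theta|^2$ is legitimate, and moreover $P_u\theta$ is a nonzero tangent vector at $u$, so ellipticity of $L$ gives $L(u,P_u\theta)\ne 0$ and hence $a$ is well-defined. Integrating over $\theta\in\S^{n-1}$ and applying \eqref{IFFT} then yields exactly \eqref{D-sub-even}.

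The only point that requires a moment's care — and which I expect to be the main (minor) obstacle — is verifying \eqref{E:compose} cleanly, since $a$ depends on $u$ through both the coefficients of $L$ and the projector $P_u$. The key observation is that $(aL)e^{i\rho\theta\cdot u}=(i\rho)^k e^{i\rho\theta\cdot u}$, which is $u$-independent apart from the exponential, so the subsequent action of $O=P_u\theta\cdot\partial_u$ gives $i\rho\,(P_u\theta\cdot\theta)\,(i\rho)^k e^{i\rho\theta\cdot u}=|P_u\theta|^2(i\rho)^{k+1}e^{i\rho\theta\cdot u}$, using that $P_u\theta\cdot\theta=P_u\theta\cdot P_u\theta=|P_u\theta|^2$ by self-adjointness and idempotence of the orthogonal projector $P_u$. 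Everything else is a line-for-line copy of the proof of Theorem \ref{T:D-sub}.
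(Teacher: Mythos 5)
Your proposal is correct and follows essentially the same route as the paper: the authors likewise reduce everything to the proof of Theorem \ref{T:IDT}/\ref{T:D-sub}, replacing the symbol identity \eqref{E:symbolL} by the composition identity \eqref{E:compose}, which they establish beforehand exactly as you do (including the computation $P_u\theta\cdot\theta=|P_u\theta|^2$ and the nonvanishing of $L(u,P_u\theta)$ and $P_u\theta$ under Tuy's condition). Your write-up is in fact slightly more detailed than the paper's one-line proof, but there is no substantive difference.
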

\begin{proof} The proof stays exactly the same, except instead of  (\ref{E:symbolL}), we use (\ref{E:compose}).\end{proof}
\begin{remark}\label{R:manyversions}
Since we are dealing with a severely overdetermined transform, the variety of possible inversion formulas is large and is not exhausted by the ones above. For instance, instead of using the operator
$\textstyle\frac{1}{|P_u\theta|^2}(O\circ \frac{1}{L(u,P_u\theta)}L)$, one can use $\textstyle(\frac{1}{|P_u\theta|^2}O)^{k+1}$.
\end{remark}

\subsection{Inversion of the Weighted Cone Transform}
The following result presents a relation between the weighted divergent beam and cone transforms, which will be instrumental in the inversion of the latter one.
\begin{prop}\label{ConeFanInt}
Suppose that $f \in \mathcal{S}(\R^n)$, and $h(t)$ is a distribution on $\R$ regular near $t=\pm 1$. Then,
\begin{align}\label{ConeFanDual}
\big\langle  h(t),\frac{C^kf(u,\beta,t)}{\sqrt{1-t^2}} \big\rangle_\R =  \langle h(\sigma \cdot\beta), D_u^k f(\sigma)\rangle_{\S^{n-1}}.
\end{align}
(Notice that $C^kf(u,\beta,t)=0$ for $|t|>1$, and is smooth for $|t|<1$.)
\end{prop}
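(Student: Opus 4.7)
My plan is to work directly from the formula (\ref{Ct}) that was established just above. Since that formula writes
\begin{align*}
C^k f(u,\beta,t) = \sqrt{1-t^2}\int_{\S^{n-1}} D_u^k f(\sigma)\,\delta(\sigma\cdot\beta - t)\,d\sigma
\end{align*}
for $|t|<1$, and both sides vanish for $|t|>1$, I can divide by $\sqrt{1-t^2}$ on $(-1,1)$ to obtain
\begin{align*}
\frac{C^k f(u,\beta,t)}{\sqrt{1-t^2}} = \int_{\S^{n-1}} D_u^k f(\sigma)\,\delta(\sigma\cdot\beta - t)\,d\sigma.
\end{align*}
This reformulation is the heart of the matter: once it is in place, the identity (\ref{ConeFanDual}) is a Fubini-style interchange followed by the sifting property of $\delta$.

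The second step is to pair both sides against $h(t)$ in the variable $t$, interchange the order of integration, and apply the sifting property. Formally,
\begin{align*}
\big\langle h(t), \tfrac{C^k f(u,\beta,t)}{\sqrt{1-t^2}} \big\rangle_\R
= \int_{\S^{n-1}} D_u^k f(\sigma) \big\langle h(t), \delta(\sigma\cdot\beta - t)\big\rangle_t d\sigma
= \int_{\S^{n-1}} D_u^k f(\sigma)\, h(\sigma\cdot\beta)\, d\sigma,
\end{align*}
which is exactly the right-hand side of (\ref{ConeFanDual}).

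The one genuine issue, and what I expect to be the main obstacle, is justifying both the interchange and the pull-back $h(\sigma\cdot\beta)$ when $h$ is only a distribution. The map $\sigma\mapsto\sigma\cdot\beta$ on $\S^{n-1}$ has critical values precisely at $t=\pm 1$ (where the level set collapses to the point $\pm\beta$), and the co-area formula gives $d\sigma = (1-t^2)^{(n-3)/2}\, dt\, d\omega$ with $\omega$ ranging over the equatorial sphere $\S^{n-2}_\beta$. Away from $t=\pm 1$ the pull-back and the interchange are unproblematic because $D_u^k f$ is Schwartz on the sphere and the level sets are smooth compact submanifolds. The regularity hypothesis on $h$ near $\pm 1$ is exactly the condition needed so that the singular part of $h$ is never confronted with a critical value of $\Phi(\sigma)=\sigma\cdot\beta$, so that $h(\sigma\cdot\beta)$ is a bona fide distribution on $\S^{n-1}$ and Fubini applies.

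To make the argument rigorous, I would approximate $h$ by smooth compactly supported functions $h_j$ that agree with $h$ in a fixed neighborhood of $\pm 1$ (possible by the regularity assumption), apply classical Fubini to $h_j$, and then pass to the limit; the limit on the left is immediate because $C^k f/\sqrt{1-t^2}$ is smooth on $(-1,1)$ with controlled behavior (at worst an integrable $(1-t^2)^{(n-3)/2}$ singularity when $n=2$) at $\pm 1$, and the limit on the right is immediate because $D_u^k f(\sigma)$ is smooth on the sphere. Alternatively, one can reduce both sides to the same iterated integral over $[-1,1]\times\S^{n-2}_\beta$ using the co-area parametrization $\sigma = t\beta + \sqrt{1-t^2}\,\omega$, which bypasses distributional pull-back entirely and makes the equality completely explicit.
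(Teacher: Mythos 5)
Your proposal is correct and follows essentially the same route as the paper: substitute the representation \eqref{Ct}, pair against $h$, interchange the order of the pairings, and collapse the $\delta$ via the sifting/convolution identity $h\ast\delta=h$. Your additional discussion of why the regularity of $h$ near $t=\pm1$ (the critical values of $\sigma\mapsto\sigma\cdot\beta$) justifies the interchange, and the suggested approximation or co-area arguments, make the step rigorous where the paper leaves it as a formal computation.
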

\begin{proof}
By the representation \eqref{Ct} of the weighted cone transform, we have
\begin{align*}
\big\langle  h(t),\frac{C^kf(u,\beta,t)}{\sqrt{1-t^2}} \big\rangle_\R
& =     \big\langle h(t),  \langle  \delta(\sigma \cdot\beta-t), D_u^k f(\sigma)\rangle_{\S^{n-1}} \big\rangle_\R\\
 &=  \big\langle (h \ast \delta)(\sigma \cdot\beta), D_u^k f(\sigma)\big\rangle_{\S^{n-1}}
 =  \langle h(\sigma \cdot\beta), D_u^k f(\sigma)\rangle_{\S^{n-1}}.
\end{align*}
\end{proof}
We note that when $h$ is a regular distribution near $t=\pm 1$, the identity \eqref{ConeFanDual} reads as
\begin{align}\label{ConeDivInt}
 \int_0^\pi C^kf(u,\beta,\psi)h(\cos \psi)d\psi =  \int_{\S^{n-1}} D_u^kf(\sigma)h(\sigma \cdot \beta) d\sigma.
\end{align}

\begin{definition}
Let $k<n-1$. We introduce the following distribution:
\begin{align}\label{E:distr_hnk}
h_{n,k}(t)
&:= \int_0^\infty e^{-its}s^{n-k-2}ds\\
&\;=i^{n-k-1}[(n-k-2)!t^{k-n+1}+(-1)^{n-k-1}i\pi\delta^{(n-k-2)}(t)],\nonumber
\end{align}
(see e.g. \cite[Chapter 2, p.172, eqn. (5)]{Gelfand}).
\end{definition}

\begin{prop}
The following identity holds:
\begin{align}\label{FDC}
\widehat{D_u^k f}(\xi)= \int_{\S^{n-1}} D_u^kf(\sigma)h_{n,k}(\sigma \cdot \xi) d\sigma.
\end{align}
\end{prop}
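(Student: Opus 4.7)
The plan is to recognize the identity as the polar-coordinate form of the Fourier transform of the homogeneous tempered distribution $D_u^k f$, with $h_{n,k}$ playing the role of the radial factor. Since the hypothesis $k<n-1$ makes $D_u^k f$ locally integrable near the origin, formula \eqref{Dext} shows that it defines a tempered distribution homogeneous of degree $-(k+1)$, so $\widehat{D_u^k f}$ is well defined.

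First I would compute the Fourier transform by duality. For any test function $\varphi\in\mathcal{S}(\R^n)$, passing to polar coordinates $y=s\sigma$ and using homogeneity $D_u^k f(s\sigma)=s^{-(k+1)}D_u^k f(\sigma)$ gives
\begin{align*}
\langle \widehat{D_u^k f},\varphi\rangle
&=\int_{\R^n} D_u^k f(y)\,\hat\varphi(y)\,dy
=\int_{\S^{n-1}} D_u^k f(\sigma)\int_0^\infty \hat\varphi(s\sigma)\, s^{n-k-2}\,ds\,d\sigma.
\end{align*}
Next, writing $\hat\varphi(s\sigma)=\int_{\R^n}e^{-is\sigma\cdot x}\varphi(x)\,dx$ and interchanging the order of integration, the radial integral becomes, by the very definition \eqref{E:distr_hnk},
$$\int_0^\infty e^{-is(\sigma\cdot x)}s^{n-k-2}\,ds=h_{n,k}(\sigma\cdot x).$$
This produces
$$\langle \widehat{D_u^k f},\varphi\rangle=\int_{\R^n}\varphi(x)\left(\int_{\S^{n-1}}D_u^k f(\sigma)\,h_{n,k}(\sigma\cdot x)\,d\sigma\right)dx,$$
which, with the usual abuse of notation writing distributions as integrals, is exactly \eqref{FDC} evaluated at $\xi=x$.

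The main obstacle is justifying the exchange of integrals, because the radial integral $\int_0^\infty e^{-ist}s^{n-k-2}\,ds$ is only conditionally (in fact distributionally) convergent, and $h_{n,k}$ is a genuine tempered distribution with a singularity at $t=0$. A clean way around this is to insert a regularization $s^{n-k-2}e^{-\varepsilon s}$ so that all integrals are absolutely convergent and classical Fubini applies, then pass to the limit $\varepsilon\downarrow 0$ in the sense of tempered distributions, invoking the explicit formula in \eqref{E:distr_hnk} to identify the limit as $h_{n,k}$. Equivalently, one can pair both sides against a test function supported away from the hyperplane $\sigma\cdot x=0$ to reduce to an elementary computation and then extend by the continuity of both sides as tempered distributions in $\xi$.
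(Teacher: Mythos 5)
Your argument is essentially identical to the paper's proof: both compute $\langle \widehat{D_u^k f},\varphi\rangle$ by duality, pass to polar coordinates using the homogeneity of degree $-(k+1)$, insert the integral representation of $\hat\varphi$, and swap the order of integration to recognize the radial integral as $h_{n,k}(\sigma\cdot\xi)$ via \eqref{E:distr_hnk}. Your closing paragraph on regularizing with $e^{-\varepsilon s}$ to justify the interchange is a sensible addition that the paper leaves implicit, but it does not change the route.
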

\begin{proof}
For each $\varphi \in \mathcal{S}({\R^n})$, since $D_u^kf$ is homogeneous of degree $-(k+1)$, we have
\begin{align*}
\langle \widehat{D_u^kf}, \varphi \rangle
& = \langle D_u^kf, \hat{\varphi} \rangle  = \int_{\R^n} D_u^k f(y) \hat{\varphi}(y) dy\\
& = \int_{\S^{n-1}} \int_0^\infty  D_u^k f(\sigma) \hat{\varphi}(s\sigma) s^{n-k-2}dsd\sigma \\
& = \int_{\S^{n-1}} \int_0^\infty  D_u^k f(\sigma) \int_{\R^n}  e^{-i s\sigma \cdot \xi} \varphi(\xi) d\xi s^{n-k-2}dsd\sigma\\
& = \int_{\R^n}  \int_{\S^{n-1}} D_u^k f(\sigma) h_{n,k}(\sigma \cdot \xi) d\sigma \varphi(\xi) d\xi,
\end{align*}
which implies \eqref{FDC}.
\end{proof}
Now, by combining \eqref{ConeDivInt} and \eqref{FDC}, and using the inversion formula for the weighted divergent beam transform \eqref{IDT}, we obtain an inversion formula for the weighted cone transform.
\begin{theorem}\label{T:ICT}
Let $f \in \mathcal{S}(\R^n)$, and $k<n-1$. Then,
\begin{align}\label{ICT}
f(x)= \frac{(-i)^{k+1}}{(2\pi)^n} \int_{\S^{n-1}} \int_0^\pi \big(\Delta_u^{(k+1)/2}C^kf(u,\beta,\psi)\big)\big|_{u=x} h_{n,k}(\cos \psi)d\psi d\beta.
\end{align}
\end{theorem}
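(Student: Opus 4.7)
The plan is to chain together the three results already established in this section: the inversion formula for the weighted divergent beam transform in Theorem~\ref{T:IDT}, the cone--beam integral duality in Proposition~\ref{ConeFanInt}, and the Fourier identity \eqref{FDC} expressing $\widehat{D_u^k f}(\theta)$ as an integral over the sphere against $h_{n,k}$.

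First, I would verify that Proposition~\ref{ConeFanInt} applies to the distribution $h=h_{n,k}$. Inspection of the explicit formula \eqref{E:distr_hnk} shows that $h_{n,k}(t)$ consists of a multiple of $t^{k-n+1}$ (interpreted as a homogeneous tempered distribution) plus a multiple of $\delta^{(n-k-2)}(t)$; both are supported/singular only at $t=0$, hence $h_{n,k}$ is regular in a neighborhood of $t=\pm 1$. Therefore, taking $h=h_{n,k}$ and $\beta=\theta$ in \eqref{ConeDivInt} and comparing with \eqref{FDC} (at $\xi=\theta\in\S^{n-1}$) yields the key identity
\begin{equation*}
\widehat{D_u^k f}(\theta) \;=\; \int_0^\pi C^k f(u,\theta,\psi)\, h_{n,k}(\cos\psi)\, d\psi.
\end{equation*}

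Next, I would substitute this representation into the divergent beam inversion formula \eqref{IDT}. Since $\Delta_u^{(k+1)/2}$ acts only in the source variable $u$, which is independent of $\psi$, I can bring the operator inside the $\psi$-integral, and then relabel the outer integration variable from $\theta$ to $\beta$. This produces exactly formula \eqref{ICT}.

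The main technical point to justify is the interchange of $\Delta_u^{(k+1)/2}$ with the $\psi$-integral. When $k$ is odd this reduces to a genuine differential operator of integer order $(k+1)/2$ in Laplacians, and the rapid decay of $f\in\mathcal{S}$ (together with the $k<n-1$ assumption used throughout for $h_{n,k}$ to be defined) makes $u\mapsto C^k f(u,\beta,\psi)$ Schwartz-class in $u$ uniformly in $(\beta,\psi)$, so differentiation under the integral is routine. When $k$ is even, $\Delta_u^{(k+1)/2}$ is interpreted as a Riesz potential power, and the interchange must be justified in the distributional sense, using that the pairing against $h_{n,k}(\cos\psi)$ (singular at $\psi=\pi/2$) is well-defined thanks to the smoothness in $\psi$ of $C^k f(u,\beta,\cdot)$ for $|\cos\psi|$ small. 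This second case is the only point in the argument requiring real care; everything else is a direct composition of the identities already proved.
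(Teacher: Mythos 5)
Your proposal is correct and follows exactly the paper's own derivation, which obtains Theorem \ref{T:ICT} by combining \eqref{ConeDivInt} (with $h=h_{n,k}$) and \eqref{FDC} to write $\widehat{D_u^kf}(\theta)=\int_0^\pi C^kf(u,\theta,\psi)h_{n,k}(\cos\psi)\,d\psi$ and then substituting into \eqref{IDT}. Your added checks (regularity of $h_{n,k}$ near $t=\pm1$ and the interchange of $\Delta_u^{(k+1)/2}$ with the $\psi$-integral) are more care than the paper itself supplies and do not change the route, though note that $C^kf(u,\beta,\psi)$ need not be Schwartz in $u$ (it can grow polynomially); only local smoothness in $u$ is actually needed for the interchange.
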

\begin{remark}\label{R:defic_cone}
The same deficiency applies here that was mentioned in remark \ref{R:defic_beam}: the formula requires the cones to be available with all vertices $u$ throughout the space, which is unacceptable for many imaging applications (e.g. Compton ones). Fortunately, a similar remedy as for the divergent beam transform exists, which we address next.
\end{remark}

\begin{theorem}\label{T:C-sub}
Let $k \in \Z_+$. Suppose that $\M\subset\R^n$ satisfies the Tuy's condition with respect to a compact $V \subset \R^n$, and $f$ is a smooth function supported in $V$. Then, depending on the parity of $k$, the following inversion formulas hold:
\begin{enumerate}
\item if k is odd, then for any homogeneous linear elliptic differential operator $L(u,\partial_u)$ of order $k+1$ on $\M$
 \begin{align}\label{C-sub_odd}
 f(x)=\frac{1}{(2\pi)^n}\int_{\S^{n-1}} \int_0^\pi \frac{1}{L(u,P_u\theta)}L(u,\partial_u) C^kf(u,\beta,\psi)h_{n,k}(\cos \psi)d\psi d\theta,
 \end{align}
\item if k is even, then for any homogeneous linear elliptic differential operator $L(u,\partial_u)$ of order $k$ on $\M$
 \begin{align}\label{C-sub_even}
 f(x)=\frac{1}{(2\pi)^n}\int_{\S^{n-1}}\int_0^\pi \frac{1}{|P_u\theta|^2}\big(O\circ \frac{1}{L(u,P_u\theta)}L\big) C^kf(u,\beta,\psi) h_{n,k}(\cos \psi)d\psi d\theta,
 \end{align}
 where $O$ is given as in \eqref{E:OperatorO}, and $u \in \M$ is related to $x$ and $\theta$ as in the Tuy's condition.
 \end{enumerate}
\end{theorem}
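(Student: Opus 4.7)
The strategy is to reduce Theorem \ref{T:C-sub} to the already established divergent beam inversions (Theorems \ref{T:D-sub} and \ref{T:D-sub-even}) by re-expressing the Fourier data $\widehat{D_u^k f}(\theta)$ directly in terms of the cone transform $C^k f(u, \theta, \cdot)$.

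First I would combine the Fourier representation \eqref{FDC} with the duality identity \eqref{ConeDivInt} of Proposition \ref{ConeFanInt}, specialized to $h = h_{n,k}$ and $\beta = \theta$. Since $h_{n,k}$ is regular near $t = \pm 1$ (its singular part sits at $t = 0$), the pairing is legitimate, and combining the two identities yields the key intermediate formula
$$\widehat{D_u^k f}(\theta) = \int_{\S^{n-1}} D_u^k f(\sigma)\, h_{n,k}(\sigma\cdot\theta)\, d\sigma = \int_0^\pi C^k f(u, \theta, \psi)\, h_{n,k}(\cos\psi)\, d\psi.$$
This converts the ``Fourier data'' that drives Theorems \ref{T:D-sub} and \ref{T:D-sub-even} entirely into cone transform data.

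Next I would substitute this expression into \eqref{D-sub} when $k$ is odd, and into \eqref{D-sub-even} when $k$ is even. Because the operators $L(u,\partial_u)$ and $O \circ \tfrac{1}{L(u, P_u\theta)} L$ differentiate only in the $u$ variable, they commute with the $\psi$-integration against $h_{n,k}(\cos\psi)$. Pushing them inside the $\psi$-integral immediately produces the claimed formulas \eqref{C-sub_odd} and \eqref{C-sub_even}, with $u \in \M$ related to $x$ and $\theta$ by the Tuy condition exactly as in the divergent beam case.

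The only delicate point is this interchange of $u$-differentiation with the distributional pairing in $\psi$. Since $f$ is smooth and supported in the compact set $V$, the map $(u,\beta,\psi) \mapsto C^k f(u,\beta,\psi)$ is smooth in $u$ and smooth in $\psi \in (0,\pi)$, while the singular part of $h_{n,k}(\cos\psi)$ is concentrated at $\psi = \pi/2$, well inside this interval. Hence Leibniz-type differentiation under the integral sign is justified, and no obstacle remains beyond the tacit restriction $k \le n-2$ needed for $h_{n,k}$ to be defined. I do not expect any deeper difficulty: all the real work was done in Theorems \ref{T:D-sub} and \ref{T:D-sub-even}, and the present theorem is essentially a transcription of those results after the $\widehat{D_u^k f}(\theta) \mapsto C^k f(u,\theta,\cdot)$ dictionary above.
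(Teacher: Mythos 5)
Your proof is correct and takes essentially the same route as the paper: the authors likewise obtain the theorem by chaining \eqref{FDC} with \eqref{ConeDivInt} to rewrite $\widehat{D_u^k f}(\theta)$ as $\int_0^\pi C^k f(u,\theta,\psi)\,h_{n,k}(\cos\psi)\,d\psi$ and then substituting into \eqref{D-sub} and \eqref{D-sub-even} according to the parity of $k$. The extra points you verify --- that $h_{n,k}$ is regular near $t=\pm 1$ so Proposition \ref{ConeFanInt} applies, that the $u$-differentiation commutes with the $\psi$-pairing, and the tacit restriction $k\le n-2$ needed for $h_{n,k}$ to be defined --- are details the paper leaves implicit.
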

\begin{proof}
The proof is just an immediate consequence of the equalities (\ref{FDC}), (\ref{ConeDivInt}), (\ref{D-sub}), and (\ref{D-sub-even}) (in that order).
\end{proof}

\section{Relations with the Radon Transform: Other Inversion Formulas}\label{S:InvRadon}
In this section, we present a relation between the Radon and weighted divergent beam and cone transforms, and from it we develop other analytical inversion formulas for the latter two in any dimension, when $k \in \mathbb{Z}_{+}$.

We recall first that the $n$-dimensional Radon transform $R$ maps a function $f$ on $\R^n $ into the set of its integrals over the hyperplanes of $\R^n $. Namely, if $\omega \in \S^{n-1}$ and $s \in \R$,
\begin{equation}
Rf(\omega,s) = R_\omega f(s) := \int_{x \cdot \omega=s} f(x) dx.
\end{equation}
In this notation, the Radon transform of $f$ is the integral of $f$ over the hyperplane perpendicular to $\omega$ at the signed distance $s$ from the origin.

We will use the following well known inversion formula for the Radon transform (see e.g. \cite{GGG,Helgason,Natt_old}). For any $f \in \mathcal{S}(\mathbb{R}^n)$,
\begin{align}\label{RadonInversion}
f(x)=\frac{(2\pi)^{1-n}}{2}
\begin{dcases}
(-1)^{(n-1)/2}\int_{\S^{n-1}} (Rf)^{(n-1)}(\omega, x \cdot \omega) d\omega, & \text{if $n$ is odd},\\
 (-1)^{(n-2)/2}\int_{\S^{n-1}} \mathcal{H}(Rf)^{(n-1)}(\omega, x \cdot \omega) d\omega, & \text{if $n$ is even},\\
\end{dcases}
\end{align}
where $\mathcal{H}$ is the Hilbert transform in $\R$ defined as the principal value integral

\begin{equation}\label{Hilbert}
\mathcal{H}g(t) = \frac{1}{\pi} p.v. \int_\R \frac{g(s)}{t-s} ds
\end{equation}
and
$$
(Rf)^{(n-1)}(\omega, s):=\frac{\partial^{n-1}}{\partial s^{n-1}}R(\omega, s).
$$
We now present a relation between the Radon and the weighted divergent beam and cone transforms for any dimension $n$ and any $k \in \mathbb{Z}_{+}$. Analogous relation for the usual divergent beam transform ($k=0$) is obtained in \cite{Hamaker} (see also \cite[Chapter 2]{NattWubb}).

Let $k \in \mathbb{Z}_{+}$, and $h$ be the function on $\R$ defined by
\begin{align}\label{h}
h(t):=
\begin{dcases}
\frac{1}{2(k-n+1)!}|t|^{k-n+1}\text{sgn}\;t, & \text{if $k > n-2$ and $k-n$ is odd},\\
\frac{1}{2(k-n+1)!}|t|^{k-n+1}, & \text{if $k > n-2$ and $k-n$ is even},\\
\delta^{(n-k-2)}(t), &  \text{if $k \leq n-2$}.
\end{dcases}
\end{align}

We note that $h$ is homogeneous of degree $k-n+1$, and for $k>n-2$, $h^{(k-n+2)}=\delta(t)$ (see e.g. \cite{Gelfand}).
\begin{prop}\label{ConeDivRadonProp}
Let $f \in \mathcal{S}(\R^n)$ and $h$ is given in \eqref{h}, then
\begin{align}\label{ConeDivRadon}
&\int_0^\pi C^kf(u,\beta,\psi)h(-\cos \psi)d\psi \\
 &=\int_{\S^{n-1}} D_u^kf(\sigma)h(-\sigma \cdot \beta) d\sigma = \int_{\mathbb{R}} Rf(\beta,s)h(u\cdot \beta-s) ds = (R_\beta f \ast h)(u\cdot \beta).\nonumber
\end{align}
\end{prop}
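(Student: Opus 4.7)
The first equality, between the integral of $C^kf$ and the integral of $D_u^kf$, is an immediate instance of Proposition \ref{ConeFanInt}, applied to the distribution $\tilde h(t):=h(-t)$, which is regular near $t=\pm 1$ since $h$ from \eqref{h} has its only singularity at $t=0$. So this step requires nothing new.

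The substantive step is identifying the integral over $\S^{n-1}$ with an integral of the Radon transform. I would unfold the definition of $D_u^kf$ and switch to Cartesian coordinates on $\R^n$ via $y=\rho\sigma$, with $d\rho\,d\sigma=|y|^{-(n-1)}\,dy$:
\begin{align*}
\int_{\S^{n-1}} D_u^k f(\sigma)\, h(-\sigma\cdot\beta)\, d\sigma
&= \int_{\S^{n-1}}\!\!\int_0^\infty f(u+\rho\sigma)\,\rho^k\, h(-\sigma\cdot\beta)\, d\rho\, d\sigma\\
&= \int_{\R^n} f(u+y)\, |y|^{\,k-n+1}\, h\!\left(-\tfrac{y}{|y|}\cdot\beta\right) dy.
\end{align*}
In each of the three cases of \eqref{h}, the function $h$ is homogeneous of degree $k-n+1$, so $|y|^{\,k-n+1} h(-(y/|y|)\cdot\beta)=h(-y\cdot\beta)$. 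The translation $x=u+y$ then gives $\int_{\R^n} f(x)\,h(u\cdot\beta-x\cdot\beta)\,dx$, and slicing $\R^n$ along the hyperplanes $\{x\cdot\beta=s\}$ yields
\[
\int_\R h(u\cdot\beta-s)\!\!\int_{x\cdot\beta=s} f(x)\,dx\, ds = \int_\R Rf(\beta,s)\,h(u\cdot\beta-s)\, ds,
\]
which is the claimed formula; the convolution form $(R_\beta f \ast h)(u\cdot\beta)$ is just a rewriting.

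The main technical nuisance is the case $k\leq n-2$, where $h=\delta^{(n-k-2)}$ is genuinely a distribution, so both the polar change of variables and the rescaling by homogeneity must be read as identities of tempered distributions paired against the Schwartz function $f$ (or against $Rf(\beta,\cdot)\in\mathcal{S}(\R)$, which is well-defined since $f\in\mathcal{S}(\R^n)$). A painless way around this is to argue first for a smooth, compactly-supported-away-from-$0$ family $h_\varepsilon$ of functions homogeneous of degree $k-n+1$ approximating $h$ in $\mathcal{S}'(\R)$, for which all the steps above are classical integrals, and then pass to the limit using the Schwartz decay of $f$ and of $Rf(\beta,\cdot)$. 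For large $k$, convergence at infinity is handled by the same Schwartz decay. With this interpretation the three identities are established simultaneously in all parameter ranges of $h$ in \eqref{h}.
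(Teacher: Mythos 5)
Your argument is correct and follows essentially the same route as the paper: the first equality is quoted from Proposition \ref{ConeFanInt}, and the second is obtained by unfolding $D_u^kf$, using the homogeneity of degree $k-n+1$ of $h$ to absorb the radial factor, translating to $x=u+y$, and slicing along the hyperplanes $x\cdot\beta=s$ (the paper writes this slicing as inserting $\int_\R \delta(x\cdot\beta-s)\,ds$). Your extra remarks on interpreting the steps distributionally when $h=\delta^{(n-k-2)}$ are a welcome refinement of the paper's formal computation but do not change the approach.
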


\begin{proof}
The first equality is already obtained in Proposition \ref{ConeFanInt}. By definition of the weighted divergent beam transform, we have
\begin{align*}
 \int_{\S^{n-1}} D_u^kf(\sigma)h(-\sigma \cdot \beta) d\sigma
&= \int_{\S^{n-1}} \int_0^\infty f(u+\rho \sigma) \rho^k d\rho h(-\sigma \cdot \beta) d\sigma \\
 &= \int_{\S^{n-1}} \int_0^\infty f(u+\rho \sigma) h(-\rho \sigma \cdot  \beta) \rho^{n-1} d\rho d\sigma,
 \end{align*}
due to the homogeneity of $h$. Letting $x=u+\rho\sigma$, we obtain
\vspace{-1ex}
 \begin{align*}
  \int_{\S^{n-1}}& \int_0^\infty  f(u+\rho \sigma) h(-\rho \sigma \cdot  \beta) \rho^{n-1} d\rho d\sigma \\
  &= \int_{\mathbb{R}^n} f(x) h((u-x) \cdot \beta) dx
  = \int_{\mathbb{R}^n} f(x) \left(\int_{\mathbb{R}} h(u \cdot \beta-s)  \delta(x \cdot \beta-s)ds\right) dx\\
 &= \int_{\mathbb{R}} \left(\int_{\mathbb{R}^n} f(x)\delta(x \cdot \beta-s) dx \right) h(u \cdot \beta-s) ds
= \int_{\mathbb{R}} Rf(\beta,s)h(u\cdot \beta-s) ds.
\end{align*}
\end{proof}
\begin{remark}\normalfont\indent
In dimension three, for $k=1$, the relation \eqref{ConeDivRadon} gives the following (geometrically obvious) formula:
$$C^1f(u,\beta,\frac{\pi}{2})=R(\beta, u\cdot \beta),$$
 which is used and in \cite{Basko}.
\end{remark}
\begin{remark}\normalfont\indent
We notice that Proposition \ref{ConeDivRadonProp} is valid for any choice of $h$ as long as it is homogeneous of degree $k-n+1$ and regular around $\pm1$. Indeed, $h(t)=t^{k-n+1}$ would work, too. In fact, the relation \eqref{ConeDivRadon} is proven for such $h$ and is used to derive an inversion formula for the cone transform for $k=0$ and $k=1$ in dimension three in \cite{Smith}, and for $k=0$ in dimension two in \cite{ADHKK,Hristova}. For the usual divergent beam transform ($k=0$) in dimension three, the applications of various functions $h$ can be found in \cite[Chapter 2, and references therein]{NattWubb}.

\end{remark}
Now, using the differentiation property of the convolution
$$\partial^\alpha(g \ast h) = \partial^\alpha g \ast h =  g \ast \partial^\alpha h$$
and the inversion formula \eqref{RadonInversion} for the Radon transform, we obtain the following formula, which can be used for inversion of both the weighted divergent beam and cone transforms:
\begin{theorem}\label{T:InviaRadon}
Suppose that for any $u \in \M$ and $\beta \in \S^{n-1}$, $s = u \cdot \beta$ and
\begin{align}\label{defG}
G(s,\beta):= (R_\beta f \ast h)(s) &= \int_0^\pi C^kf(u,\beta,\psi)h(-\cos \psi)d\psi \\
&=\int_{\S^{n-1}} D_u^kf(\sigma)h(-\sigma \cdot \beta) d\sigma.\nonumber
\end{align}
Then, for any $f \in \mathcal{S}(\mathbb{R}^n)$,
\begin{align}\label{ConeInversion}
f(x)= \frac{1}{2}(2\pi)^{1-n}
\begin{dcases}
(-1)^{(n-1)/2} \int_{\S^{n-1}} G^{(k+1)}(x \cdot \beta,\beta) \;d\beta, & \text{if $n$ is odd},\\
(-1)^{(n-2)/2} \int_{\S^{n-1}} \mathcal{H} G^{(k+1)}(x \cdot \beta,\beta)\; d\beta, & \text{if $n$ is even},
\end{dcases}
\end{align}
where $G^{(k+1)}$ is the $(k+1)$-st derivative of $G$ with respect to $s$, $h$ is given in \eqref{h}, and $\mathcal{H}$ is the Hilbert transform \eqref{Hilbert} with respect to $s$.
\end{theorem}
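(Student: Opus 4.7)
The plan is to derive the formula from Proposition \ref{ConeDivRadonProp} together with the Radon inversion formula \eqref{RadonInversion}. Proposition \ref{ConeDivRadonProp} already furnishes the identity $G(s,\beta) = (R_\beta f \ast h)(s)$, so the only remaining work is to differentiate both sides $(k+1)$ times in $s$ and identify the resulting distribution.

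Using the standard property $(g \ast h)^{(m)} = g \ast h^{(m)}$ for convolutions of tempered distributions, I would write $G^{(k+1)}(s,\beta) = (R_\beta f \ast h^{(k+1)})(s)$. The key distributional computation is then to verify that $h^{(k+1)} = \delta^{(n-1)}$ in each of the three cases listed in \eqref{h}. If $k\le n-2$, the definition $h=\delta^{(n-k-2)}$ gives this immediately: $k+1$ further differentiations produce $\delta^{(n-k-2+k+1)} = \delta^{(n-1)}$. If $k>n-2$, the remark following \eqref{h} records that $h^{(k-n+2)} = \delta$, so differentiating $n-1$ more times again yields $\delta^{(n-1)}$. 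The parity split in \eqref{h} between $|t|^{k-n+1}$ and $|t|^{k-n+1}\,\mathrm{sgn}\,t$ is there precisely to ensure the identity $h^{(k-n+2)} = \delta$ holds with the correct factorial and sign in each parity (a quick check for small $m = k-n+1$ confirms that the standard derivative rules $\tfrac{d}{dt}|t|^a = a|t|^{a-1}\mathrm{sgn}\,t$ and $\tfrac{d}{dt}(|t|^a\mathrm{sgn}\,t)=a|t|^{a-1}$ propagate the normalization).

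Since $R_\beta f \ast \delta^{(n-1)} = (R_\beta f)^{(n-1)}(s) = (Rf)^{(n-1)}(\beta,s)$, I obtain $G^{(k+1)}(s,\beta) = (Rf)^{(n-1)}(\beta,s)$. Setting $s = x\cdot\beta$ and plugging this into \eqref{RadonInversion} --- using the odd-$n$ branch without the Hilbert transform and the even-$n$ branch with it --- produces \eqref{ConeInversion} directly.

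I do not anticipate any serious obstacle. The only nontrivial step is the distributional identity $h^{(k+1)} = \delta^{(n-1)}$, and this is essentially built into the definition of $h$; the casework amounts to bookkeeping rather than substantive analysis. Notice also that the Tuy-type admissibility hypotheses used in Theorems \ref{T:D-sub}--\ref{T:C-sub} play no role here, because the reduction goes through the Radon transform, whose inversion formula is globally available in both parities of $n$.
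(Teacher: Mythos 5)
Your proposal is correct and takes essentially the same route as the paper: both reduce the claim via Proposition \ref{ConeDivRadonProp} to differentiating the convolution $R_\beta f \ast h$ a total of $n-1$ times in $s$ and then applying the Radon inversion formula \eqref{RadonInversion}. The only cosmetic difference is that you place all $k+1$ derivatives on $h$ (verifying $h^{(k+1)}=\delta^{(n-1)}$ directly), whereas the paper distributes them between $R_\beta f$ and $h$ using $h^{(k-n+2)}=\delta$ in the case $k\geq n-2$; the substance is identical.
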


\begin{proof}
Using $h$ given in \eqref{h} for $k \geq n-2$, we have
\begin{align*}
(R_\beta f \ast h)^{(k+1)}= (R_\beta f)^{(n-1)} \ast h^{(k-n+2)}= (R_\beta f)^{(n-1)} \ast \delta = (R_\beta f)^{(n-1)},
\end{align*}
and for $k < n-2$, we get
\begin{align*}
(R_\beta f \ast h)^{(k+1)}= (R_\beta f \ast \delta^{(n-k-2)})^{(k+1)} = (R_\beta f)^{(n-1)}.
\end{align*}
Hence, the application of Radon transform inversion \eqref{RadonInversion} gives the result.
\end{proof}

\begin{remark}\normalfont\indent
The reconstruction formula \eqref{ConeInversion} is independent of the geometry of the manifold $\M$ that $u$ belongs to. Indeed, for the weighted cone transform, it is sufficient that for any $s \in \R$ and $\beta \in \S^{n-1}$, there is a vertex $u = s \beta+y$, where  $y \bot \beta$, and the weighted cone data is available at all angles $\psi$ for these $u$ and $\beta$. In other words, the requirement for the reconstruction is that any hyperplane intersecting the domain of reconstruction contains the vertex of a cone with the axis normal to the plane and all opening angles.

For the weighted divergent beam transform, the corresponding condition is that for any $s \in \R$ and $\beta \in \S^{n-1}$, there is a source $u = s \beta+y$, where  $y \bot \beta$, and the weighted divergent beam data is available at all directions $\sigma$ for this source $u$.
\end{remark}
\section{Numerical Implementation of Theorem \ref{T:InviaRadon}}\label{S:numerics}
In this section, we present the results of numerical implementation of Theorem \ref{T:InviaRadon}. In dimension two, the weighted cone transform and divergent beam transforms are similar, so we only provide examples for the weighted cone transform for $k=1$ using two different vertex geometries. We then give the reconstruction results for the weighted cone transform in dimension three for $k=0$ and $k=2$ using a spherical vertex geometry. Examples showing the reaction of the algorithms to Gaussian white noise in the data are also provided. We also present an example of numerical inversion of the weighted divergent beam transform in dimension three for the cases $k=1$ and $k=2$ using a spherical source geometry. 

All phantoms are placed off-center of the vertex curve/surface, to avoid unintended use of rotational invariance. Care was taken to avoid other possibilities of committing an inverse crime, by making the forward and inverse algorithms as unrelated as possible.
\subsection{2D Image Reconstruction from Cone Data for k=1}
In dimension two, for $k = 0$, the relation \eqref{ConeDivRadon} gives $C^0(u,\beta,\pi/2)=Rf(\beta, u \cdot \beta)$, which is geometrically obvious (also see \cite{Basko}). Thus, we focus here on the case $k=1$ only. Since a cone in 2D is represented by two rays with a common vertex, the weighted cone transform for $k=1$ is given by
\begin{align*}
C^1&f(u,\beta(\phi),\psi) \\
&= \int_0^\infty [f(u+r(\cos(\phi-\psi),\sin(\phi-\psi))+f(u+r(\cos(\phi+\psi),\sin(\phi+\psi))] r dr,
\end{align*}
where $\beta(\phi)=(\cos(\phi),\sin(\phi))$. The inversion formula \eqref{ConeInversion} now reads as
\begin{align}\label{ConeInversion2D}
 f(x) = \frac{-1}{8\pi} \int_{\S^1} \Big(\mathcal{H} \frac{\partial^2}{\partial s^2} \int_0^\pi C^1 f(s\beta+y,\beta,\psi)\text{sgn}(\cos \psi)d\psi\Big) \Big|_{s=x \cdot \beta} d\beta.
\end{align}

For the numerical implementation of \eqref{ConeInversion2D}, we considered the phantom
$$f = \raisebox{2pt}{$\chi$}_{D_1} - 0.5 \raisebox{2pt}{$\chi$}_{D_2},$$
 where $D_1$ and $D_2$ are the concentric disks centered at $(0,0.4)$ with radii 0.25 and 0.5, respectively. Here, $\raisebox{2pt}{$\chi$}_{D_i}$ denotes the characteristic function of each disk (see Fig. \ref{fig:phantom2D}). The cone projection data of the phantom $f$ is simulated numerically using 256 counts for vertices $u$, 400 counts for central axis directions $\beta$ and 90 counts for opening angles $\psi$.
\begin{figure}[H]
\begin{center}
                 \begin{subfigure}{0.45\textwidth}
                \includegraphics[width=\textwidth]{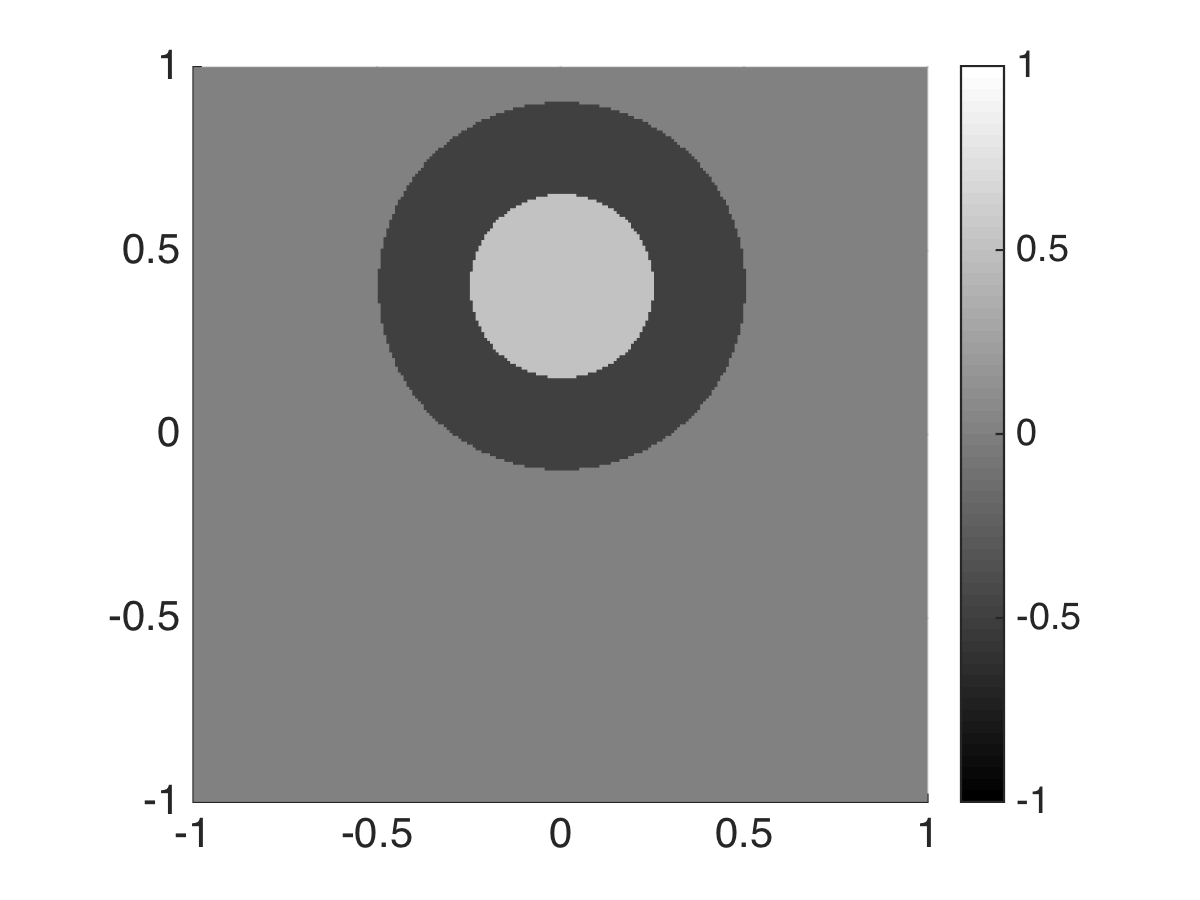}
        \end{subfigure}
        \hspace{-1em}
        \begin{subfigure}{0.5\textwidth}
                \includegraphics[width=\textwidth]{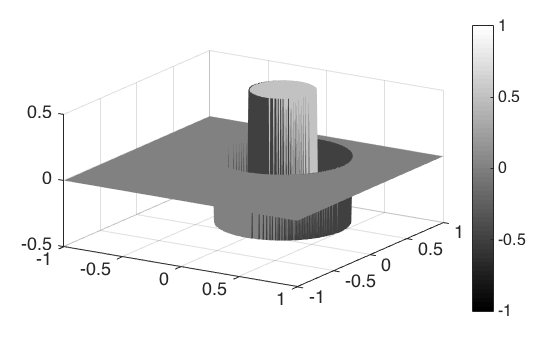}
        \end{subfigure}
        \caption{The density plot (left) and surface plot (right) of the phantom $f$ that consists of two concentric disks centered at $(0,0.4)$ with radii 0.25 and 0.5, and densities 1 and -0.5 units, respectively.}
        \label{fig:phantom2D}
\end{center}
\end{figure}
\begin{figure}[H]
\begin{center}
        \begin{subfigure}{0.45\textwidth}
                \includegraphics[width=\textwidth]{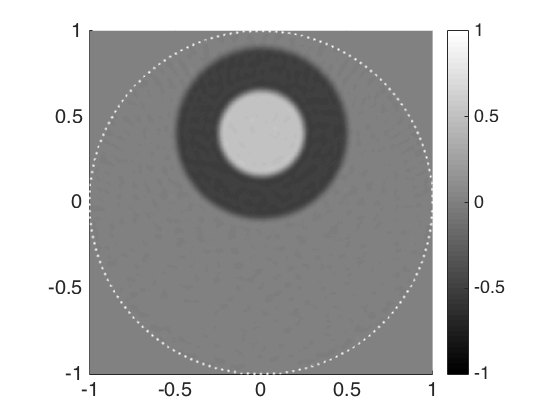}
        \end{subfigure}
        \begin{subfigure}{0.45\textwidth}
                \includegraphics[width=\textwidth]{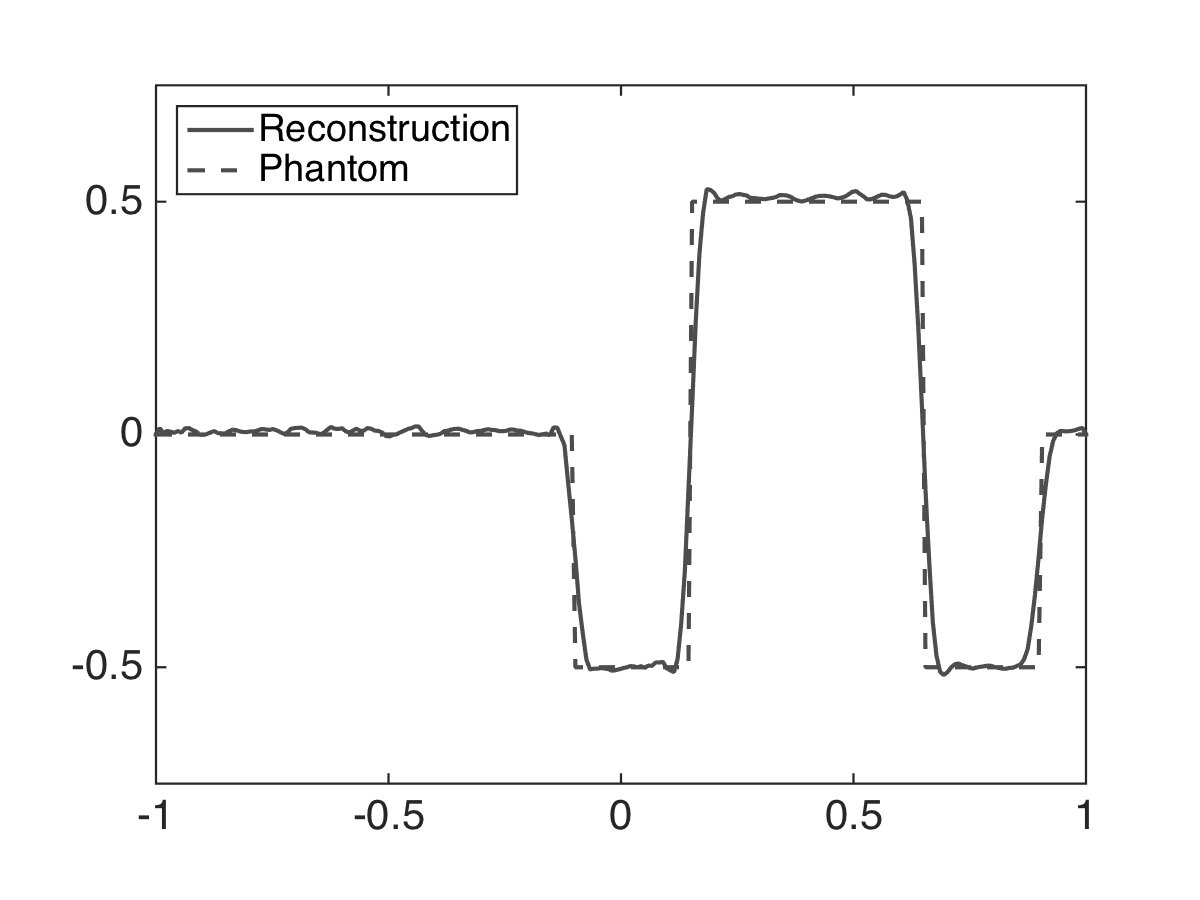}
        \end{subfigure}
  \caption{The density plot of $256 \times 256$ image reconstructed from the simulated cone data using 256 counts for vertices $u$ (represented by white dots on the unit circle), 400 counts for directions $\beta$ and 90 counts for opening angles $\psi$ (left), and the comparison of $y$-axis profiles of the phantom and the reconstruction (right).}
        \label{fig:2Dcircle}
\end{center}
\end{figure}
\begin{figure}[H]
\begin{center}
        \begin{subfigure}{0.45\textwidth}
                \includegraphics[width=\textwidth]{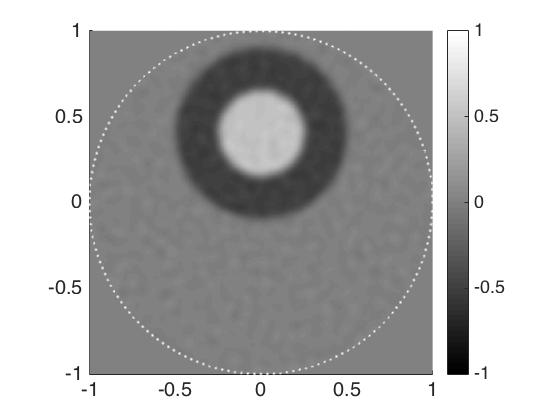}
        \end{subfigure}
        \begin{subfigure}{0.45\textwidth}
                \includegraphics[width=\textwidth]{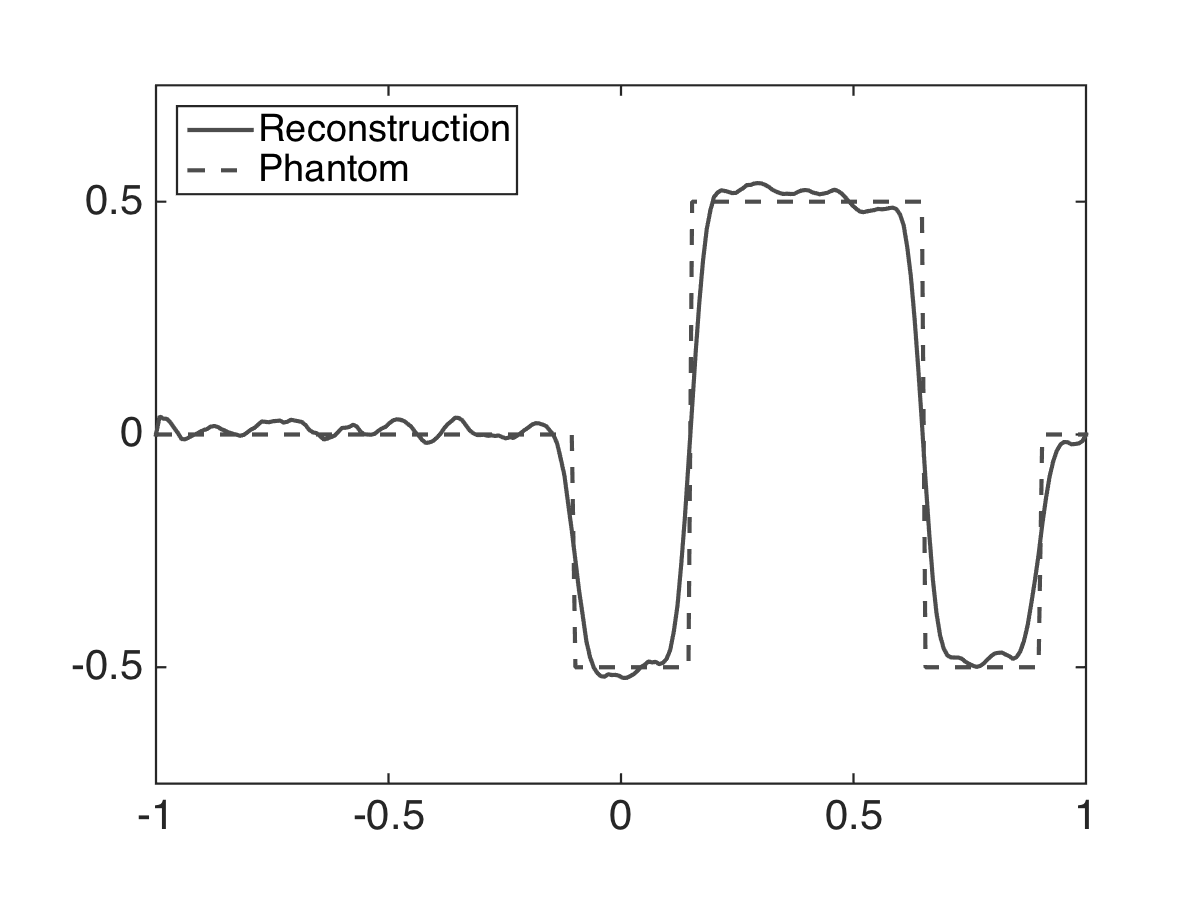}
        \end{subfigure}
  \caption{The density plot of $256 \times 256$ image reconstructed from cone data contaminated with $5\%$ Gaussian noise (left), and the comparison of $y$-axis profiles of the phantom and the reconstruction (right). The dimensions of the cone data are taken as in Fig. \ref{fig:2Dcircle}.}
        \label{fig:2DcircleNoisy}
\end{center}
\end{figure}
As our inversion formula is valid for arbitrary geometry of vertices, we considered both a circular and a square geometry of vertices of the cones.

 Figure \ref{fig:2Dcircle} shows the results of reconstruction from cone projections where the vertices cover the unit circle. The density plot and the $y$-axis profile of the reconstruction are provided in (a) and (b), respectively. The results with a $5\%$ Gaussian white noise added to the cone data is shown in Figure \ref{fig:2DcircleNoisy}.

In Figure \ref{fig:2Dsquare}, we provide the results of reconstruction from cone projections where the vertices are placed along the sides of a square with sides of length two. The density plot and the $y$-axis profile of the reconstruction are provided in Figure \ref{fig:2Dsquare} (a) and (b), respectively. Figure \ref{fig:2DsquareNoisy} shows the results with a $5\%$ Gaussian white noise added to the cone data.
\begin{figure}[H]
\begin{center}
        \begin{subfigure}{0.45\textwidth}
                \includegraphics[width=\textwidth]{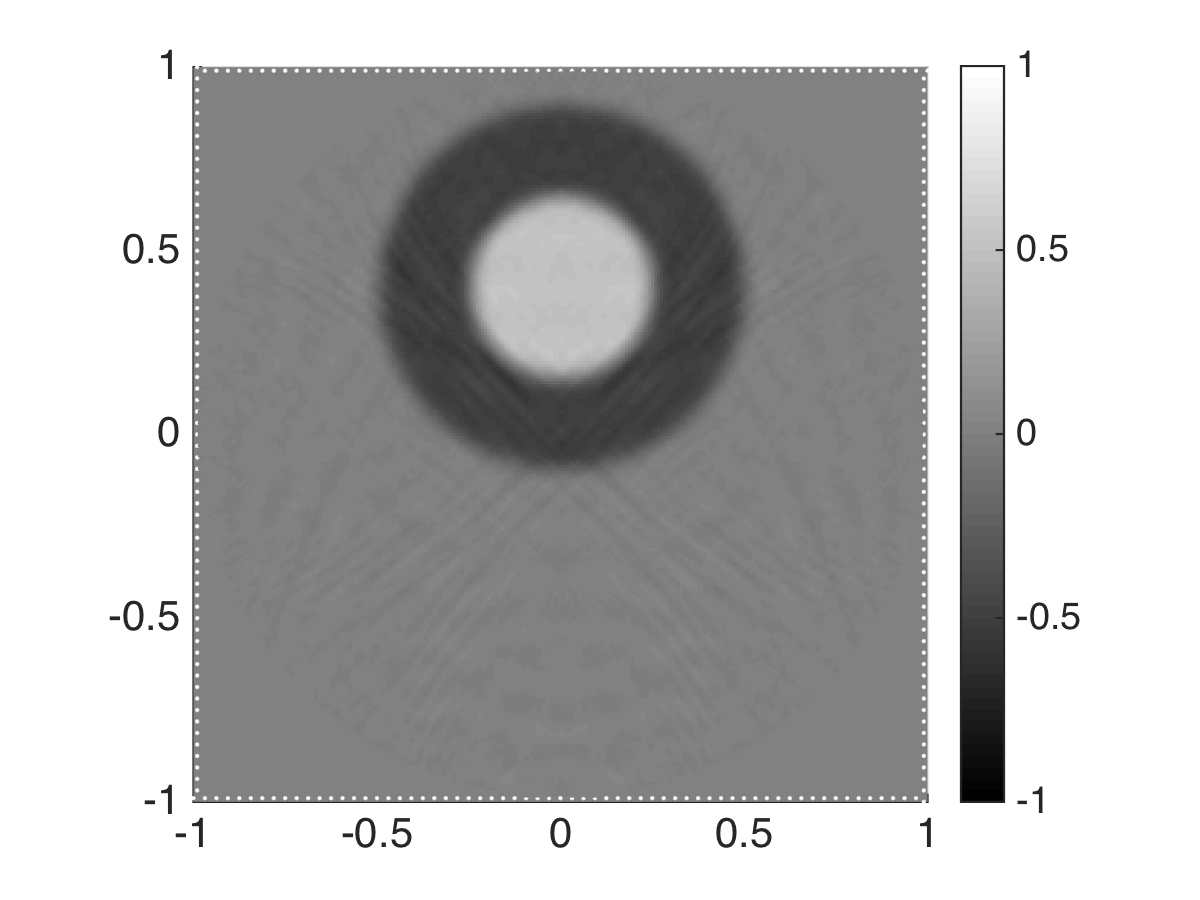}
        \end{subfigure}
        \begin{subfigure}{0.45\textwidth}
                \includegraphics[width=\textwidth]{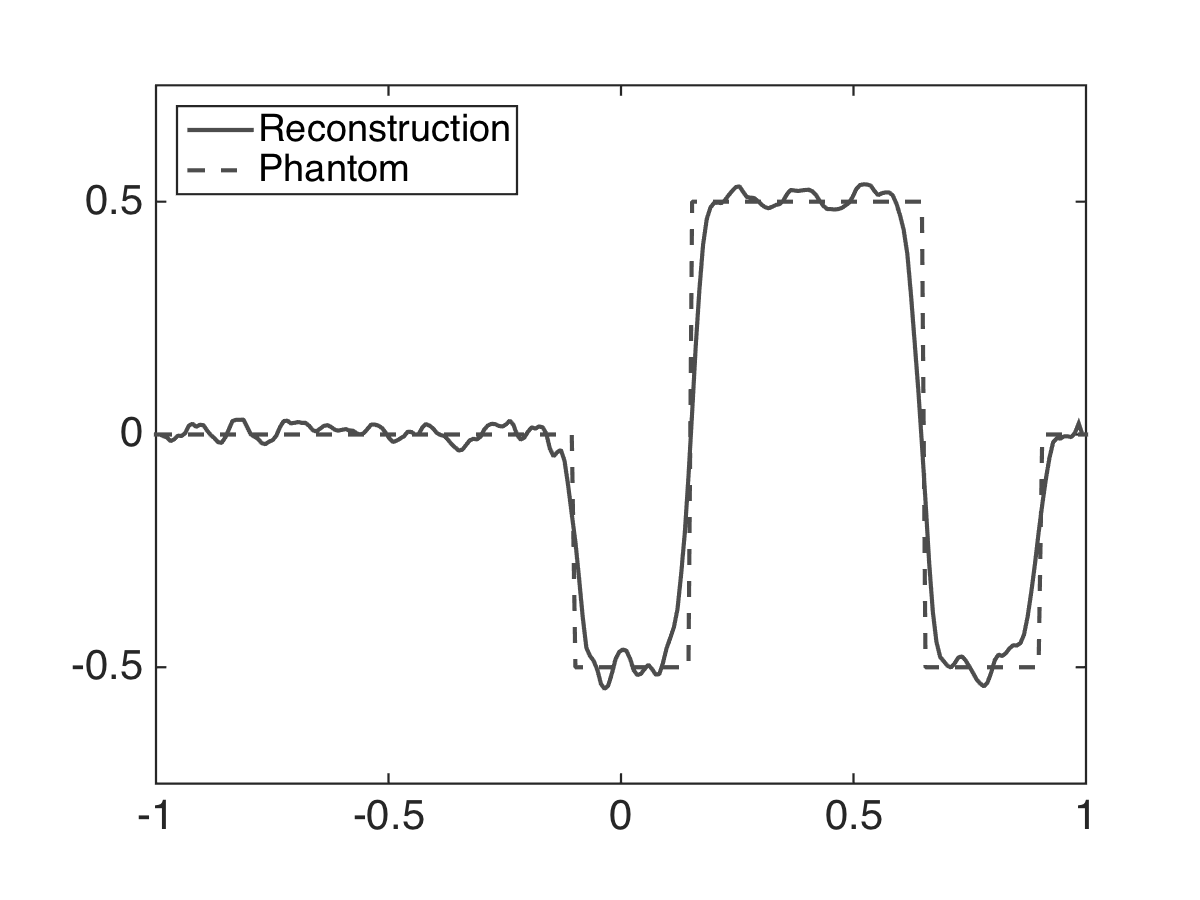}
        \end{subfigure}
  \caption{The density plot of $256 \times 256$ image reconstructed from the simulated cone data using 256 counts for vertices $u$ (represented by white dots around the square), 400 counts for directions $\beta$ and 90 counts for opening angles $\psi$ (left), and the comparison of $y$-axis profiles of the phantom and the reconstruction (right).}
        \label{fig:2Dsquare}
\end{center}
\end{figure}
\begin{figure}[H]
\begin{center}
        \begin{subfigure}{0.45\textwidth}
                \includegraphics[width=\textwidth]{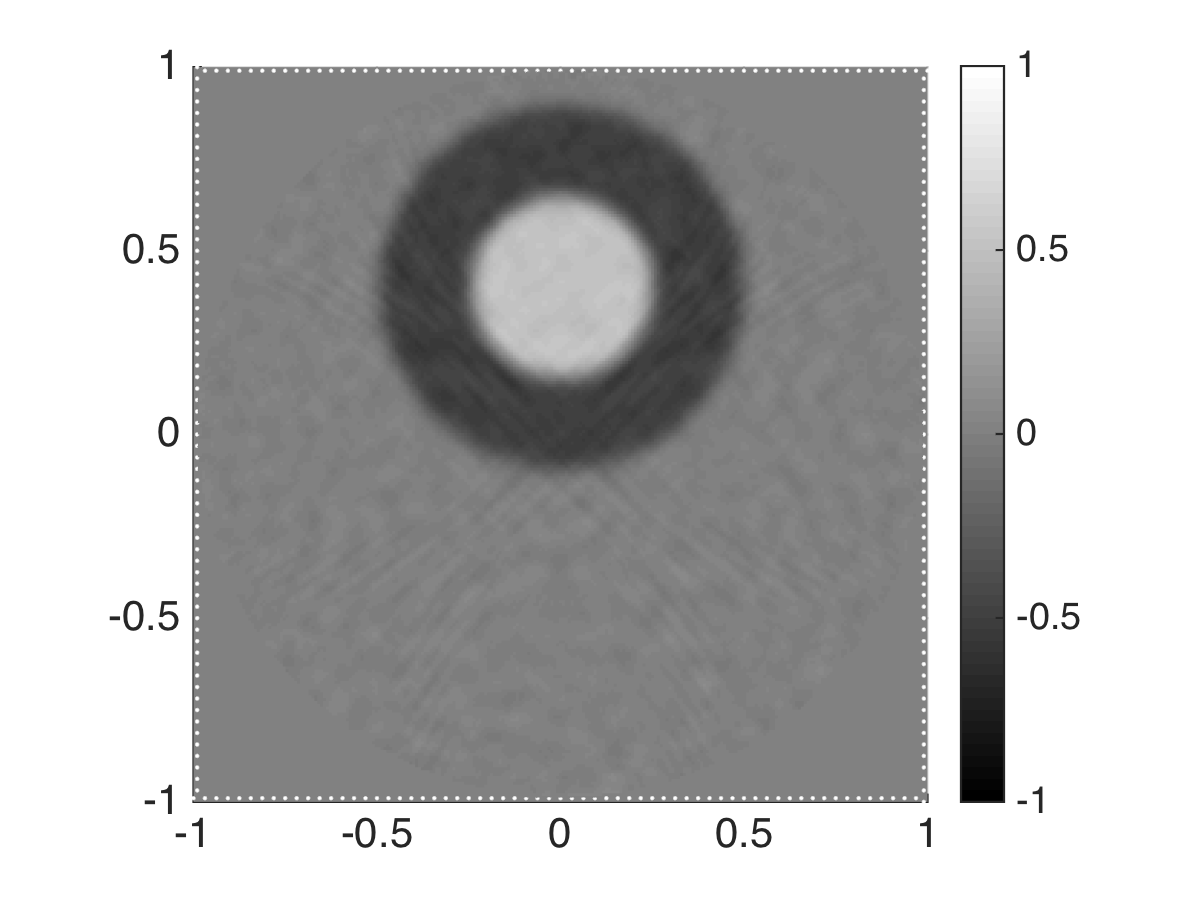}
        \end{subfigure}
        \begin{subfigure}{0.45\textwidth}
                \includegraphics[width=\textwidth]{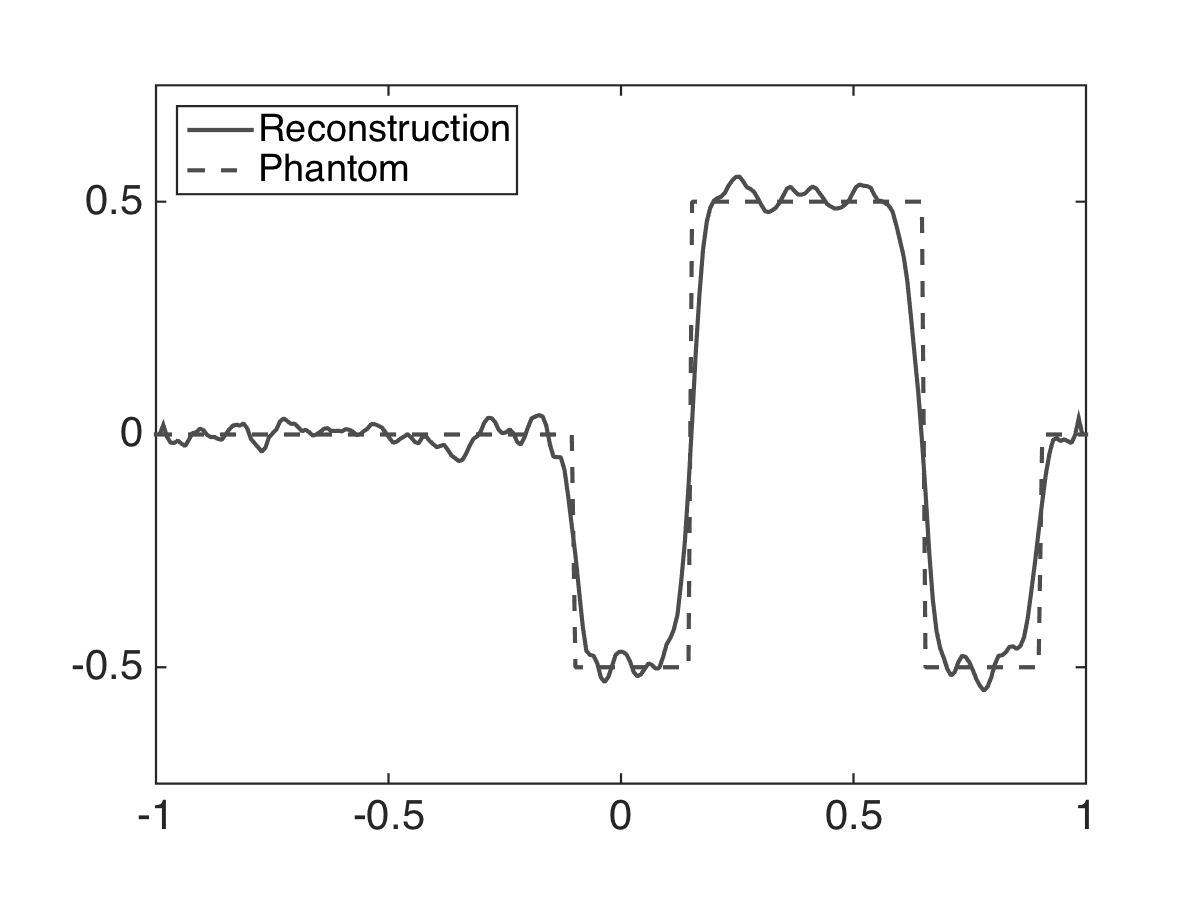}
        \end{subfigure}
  \caption{The density plot of $256 \times 256$ image reconstructed from cone data contaminated with $5\%$ Gaussian noise (left), and the comparison of $y$-axis profiles of the phantom and the reconstruction (right). The dimensions of the cone data are taken as in Fig. \ref{fig:2Dsquare}.}
        \label{fig:2DsquareNoisy}
\end{center}
\end{figure}
In the case of the square geometry (but not in the circular one), some corner-related effects appear along the diagonals, as shown in Figure \ref{fig:diagprofiles}. They can be eliminated by using a finer discretization in $\beta$.
\begin{figure}[H]
\begin{center}
                 \begin{subfigure}{0.45\textwidth}
                \includegraphics[width=\textwidth]{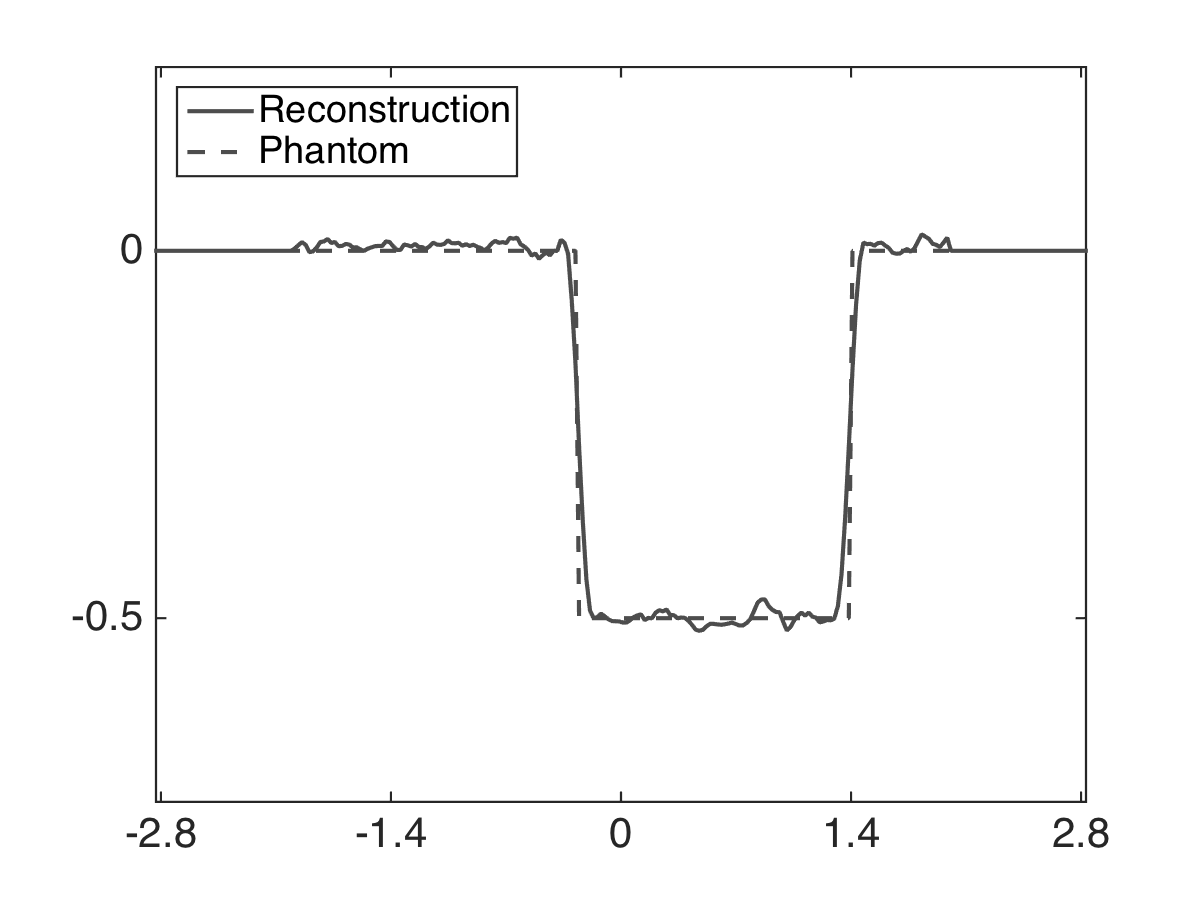}
        \end{subfigure}
        \begin{subfigure}{0.45\textwidth}
                \includegraphics[width=\textwidth]{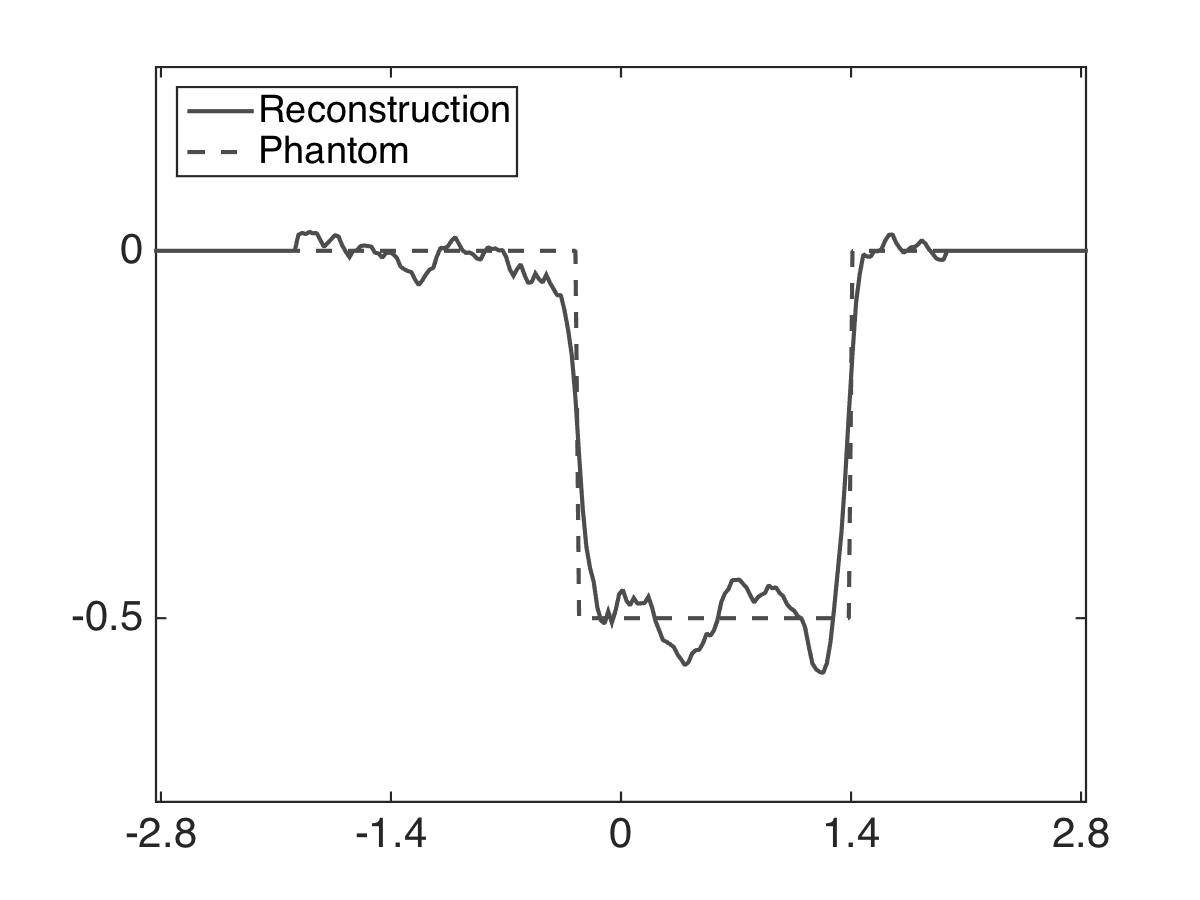}
        \end{subfigure}
        \caption{Comparison of the profiles of the reconstruction along the diagonal of the square region for the circular (left) and square (right) locations of the vertices (detectors).}
        \label{fig:diagprofiles}
\end{center}
\end{figure}
\subsection{3D Image Reconstruction from Weighted Cone Data for k=0 and k=2}
In dimension three, the relation \eqref{ConeDivRadon} is geometrically obvious for the case k=1 (see remark 4.2), and is numerically implemented in \cite{Basko} using spherical harmonic expansions. Here, we provide examples of reconstruction from the weighted cone data for $k=0$ and $k=2$. Theorem \ref{T:InviaRadon} gives the following inversion formula for $k=0$:
\begin{align}\label{ConeInversion3Dk0}
 f(x) &= \frac{1}{8\pi^2} \int_{\S^2} \Big(\frac{\partial}{\partial s} \int_0^\pi C^0 f(s\beta+y,\beta,\psi)\delta'(\cos \psi)d\psi \Big)\Big|_{s=x \cdot \beta} d\beta \nonumber\\
 &= \frac{-1}{8\pi^2} \int_{\S^2} \Big(\frac{\partial}{\partial s} \Big( \frac{\partial}{\partial t} C^0 f(s\beta+y,\beta,t) \Big)\Big|_{t=0}\Big)\Big|_{s=x \cdot \beta} d\beta,
\end{align}
and for $k=2$, we have
\begin{align}\label{ConeInversion3Dk2}
 f(x) = \frac{-1}{16\pi^2} \int_{\S^2} \Big(\frac{\partial^3}{\partial s^3} \int_0^\pi C^2 f(s\beta+y,\beta,\psi)\text{sgn}(\cos \psi)d\psi \Big)\Big|_{s=x \cdot \beta} d\beta.
\end{align}

In our examples, the vertices of the cones cover the unit sphere $\mathbb{S}^2$ in $\R^3$ and the phantom is the characteristic function of the 3D ball of radius 0.5 units located strictly inside and off-center of this sphere.

The forward simulations of weighted cone projections were done numerically using 1800 counts for vertices $u$ on the unit sphere, 1800 counts for unit vectors for the cone axis directions $\beta$ and 200 counts for opening angles $\psi$. For the discretization of the sphere, we used a uniform mesh for both the azimuthal and the polar angles.

Figure \ref{fig:3Dk0} shows the three cross sections of the spherical phantom and of its reconstructions from the cone data obtained via \eqref{ConeInversion3Dk0}. The comparison of the phantom and the reconstruction given in Figure \ref{fig:3Dk0} in terms of their coordinate axis profiles is provided in Figure \ref{fig:3Dk0profile}.
\begin{figure}[H]
\begin{center}
                 \begin{subfigure}{0.5\textwidth}
                \includegraphics[width=\textwidth]{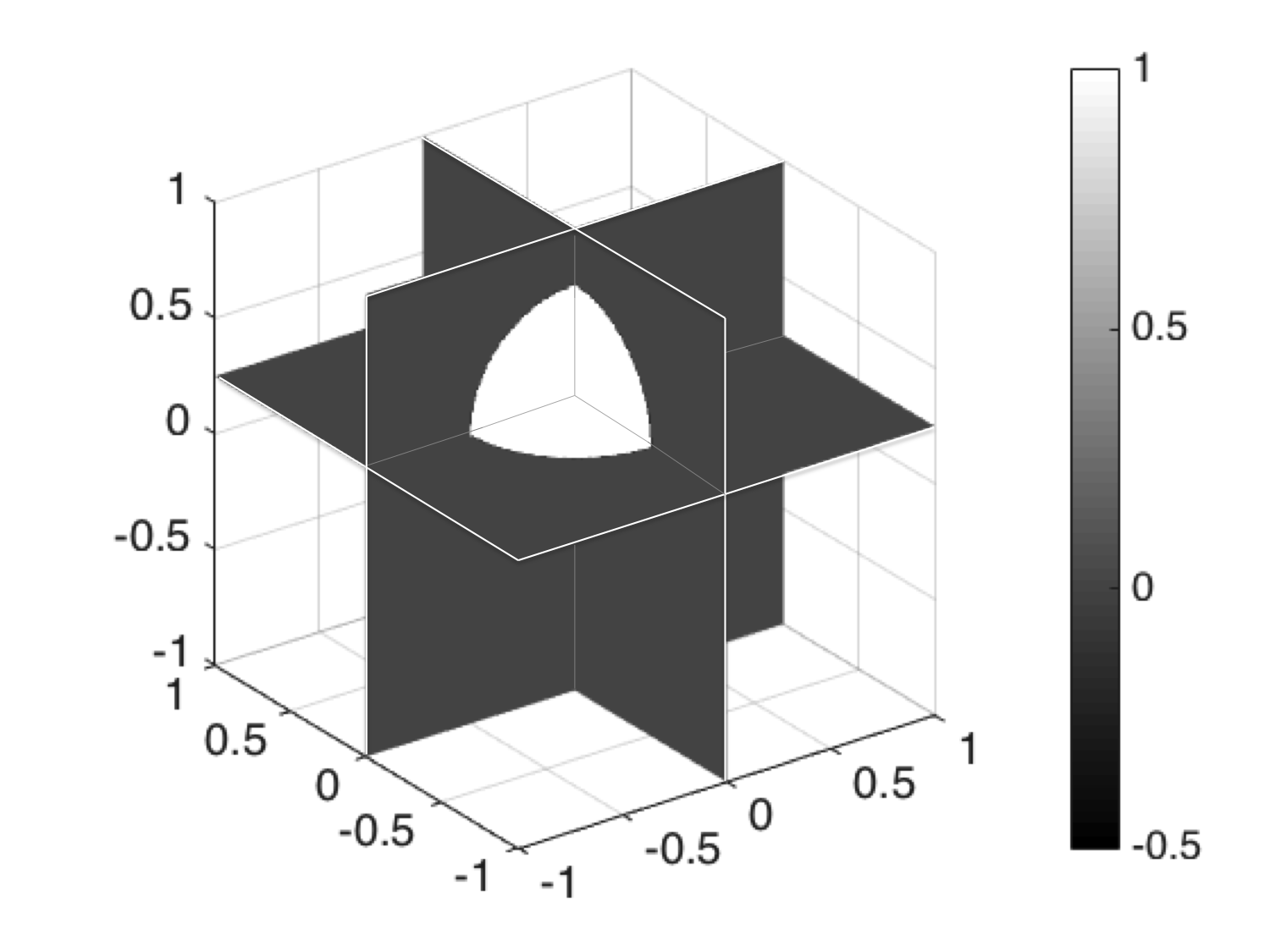}
        \end{subfigure}
\hspace{-1em}
        \begin{subfigure}{0.5\textwidth}
                \includegraphics[width=\textwidth]{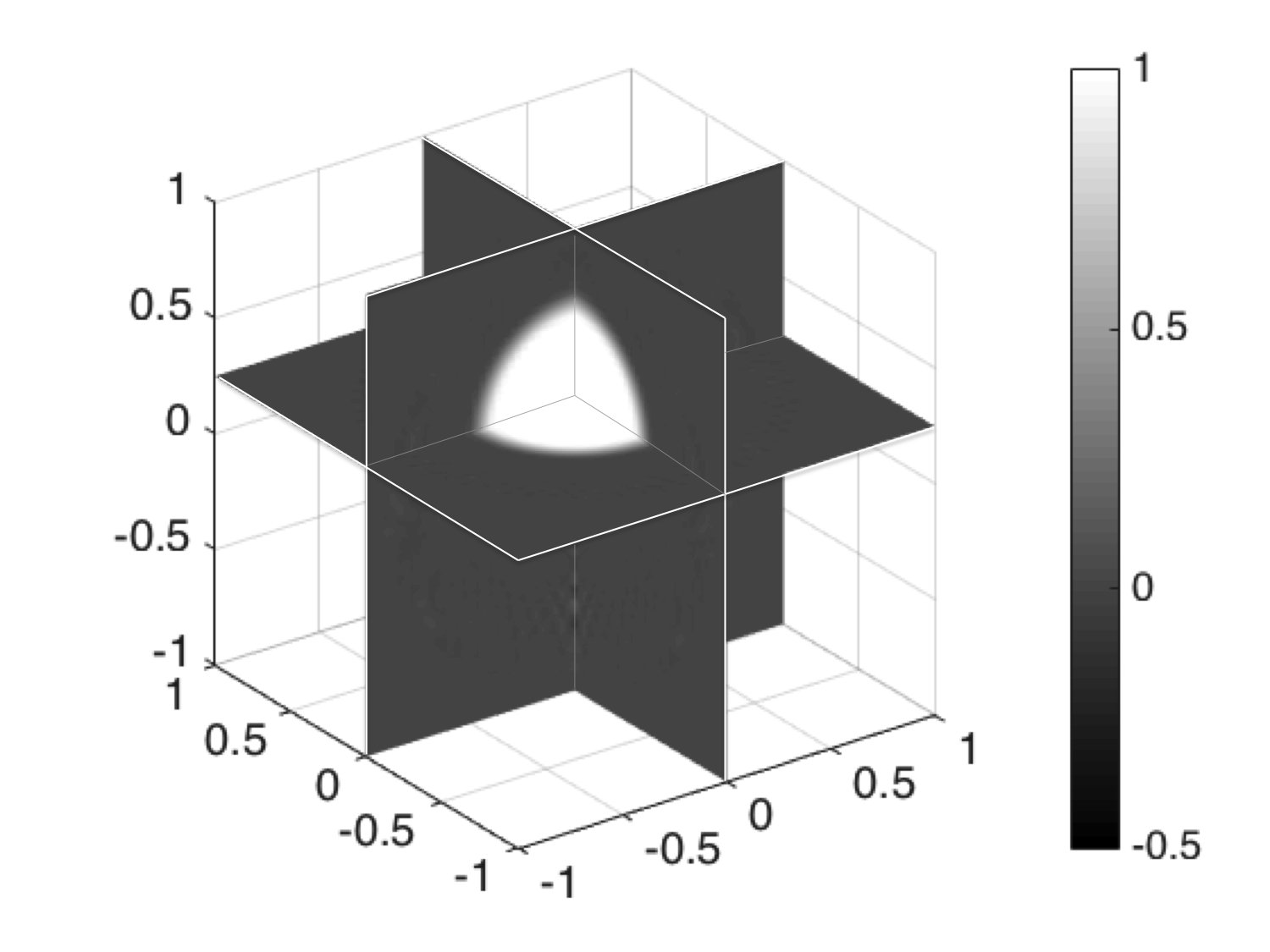}
        \end{subfigure}
       \caption{The 3D ball phantom with radius 0.5, center (0,0,0.25) and unit density (left), and $90 \times 90$ image reconstructed via \eqref{ConeInversion3Dk0} from weighted cone data simulated using 1800 counts for vertices $u$ on the unit sphere, 1800 counts for directions $\beta$ and 200 counts for opening angles $\psi$ (right). The cross sections by the planes $x=0, y=0$ and $z=0.25$ are shown.}
        \label{fig:3Dk0}
\end{center}
\end{figure}

\begin{figure}[H]
\begin{center}
             \begin{subfigure}{0.32\textwidth}
                \includegraphics[width=\textwidth]{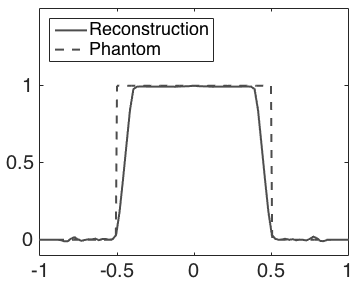}
        \end{subfigure}
  \begin{subfigure}{0.32\textwidth}
                \includegraphics[width=\textwidth]{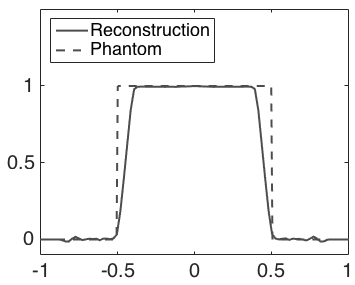}
        \end{subfigure}
  \begin{subfigure}{0.32\textwidth}
                \includegraphics[width=\textwidth]{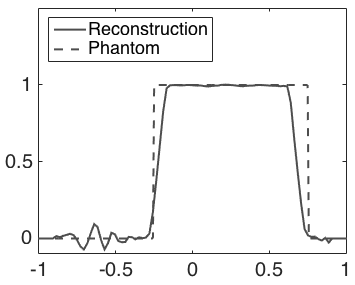}
        \end{subfigure}
        \caption{Comparison of the $x$-axis (left), $y$-axis (center) and $z$-axis (right) profiles of the reconstruction and the phantom given in Fig. \ref{fig:3Dk0}.}
       \label{fig:3Dk0profile}
\end{center}
\end{figure}

Figure \ref{fig:3Dk2} shows the three cross sections of the spherical phantom and of its reconstructions from the cone data obtained via \eqref{ConeInversion3Dk2}. The comparison of the phantom and the reconstruction given in Figure \ref{fig:3Dk2} in terms of their coordinate axis profiles is provided in Figure \ref{fig:3Dk2profile}.
\begin{figure}[H]
\begin{center}
                 \begin{subfigure}{0.5\textwidth}
                \includegraphics[width=\textwidth]{Phantom3D.png}
        \end{subfigure}
\hspace{-1em}
        \begin{subfigure}{0.5\textwidth}
                \includegraphics[width=\textwidth]{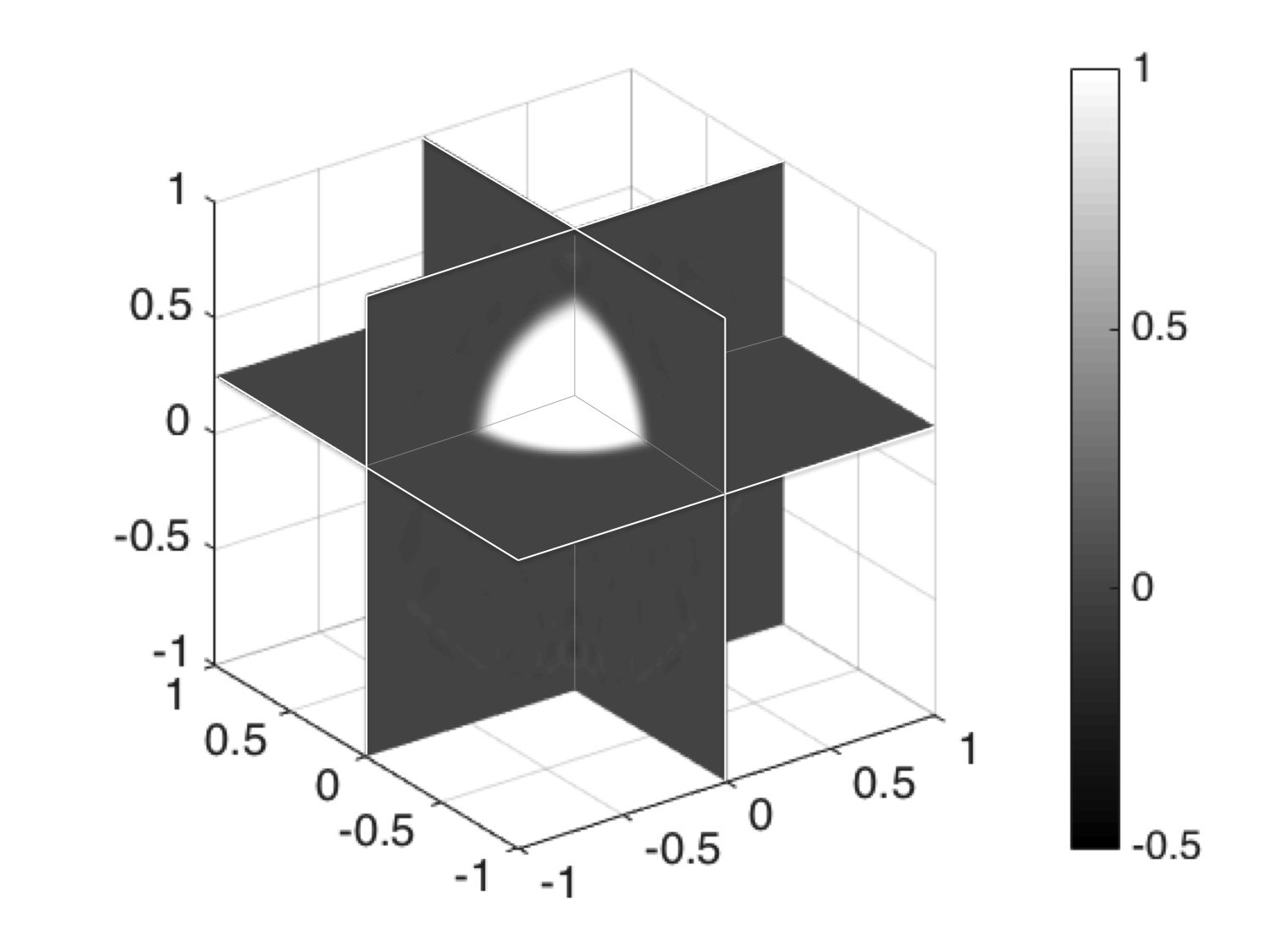}
        \end{subfigure}
       \caption{The 3D ball phantom with radius 0.5, center (0,0,0.25) and unit density (left), and $90 \times 90$ image reconstructed via \eqref{ConeInversion3Dk2} from weighted cone data simulated using 1800 counts for vertices $u$ on the unit sphere, 1800 counts for directions $\beta$ and 200 counts for opening angles $\psi$ (right). The cross sections by the planes $x=0, y=0$ and $z=0.25$ are shown.}
        \label{fig:3Dk2}
\end{center}
\end{figure}

\begin{figure}[H]
\begin{center}
             \begin{subfigure}{0.32\textwidth}
                \includegraphics[width=\textwidth]{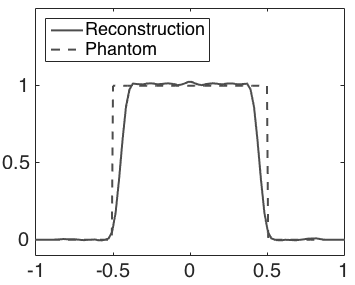}
        \end{subfigure}
  \begin{subfigure}{0.32\textwidth}
                \includegraphics[width=\textwidth]{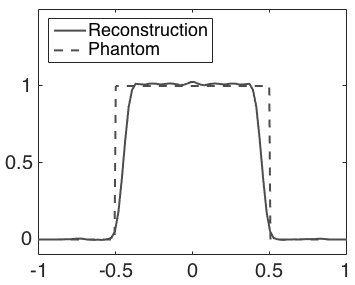}
        \end{subfigure}
  \begin{subfigure}{0.32\textwidth}
                \includegraphics[width=\textwidth]{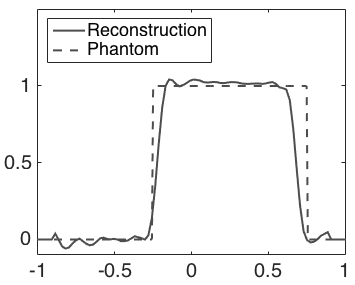}
        \end{subfigure}
        \caption{Comparison of the $x$-axis (left), $y$-axis (center) and $z$-axis (right) profiles of the reconstruction and the phantom given in Fig. \ref{fig:3Dk2}.}
       \label{fig:3Dk2profile}
\end{center}
\end{figure}
Figures \ref{fig:3Dnoisy} and \ref{fig:3Dnoisyprofile} show the results of reconstruction from weighted cone data for $k=2$ contaminated with $5\%$ Gaussian white noise.
\begin{figure}[H]
\begin{center}
                 \begin{subfigure}{0.5\textwidth}
                \includegraphics[width=\textwidth]{Phantom3D.png}
        \end{subfigure}
\hspace{-1em}
        \begin{subfigure}{0.5\textwidth}
                \includegraphics[width=\textwidth]{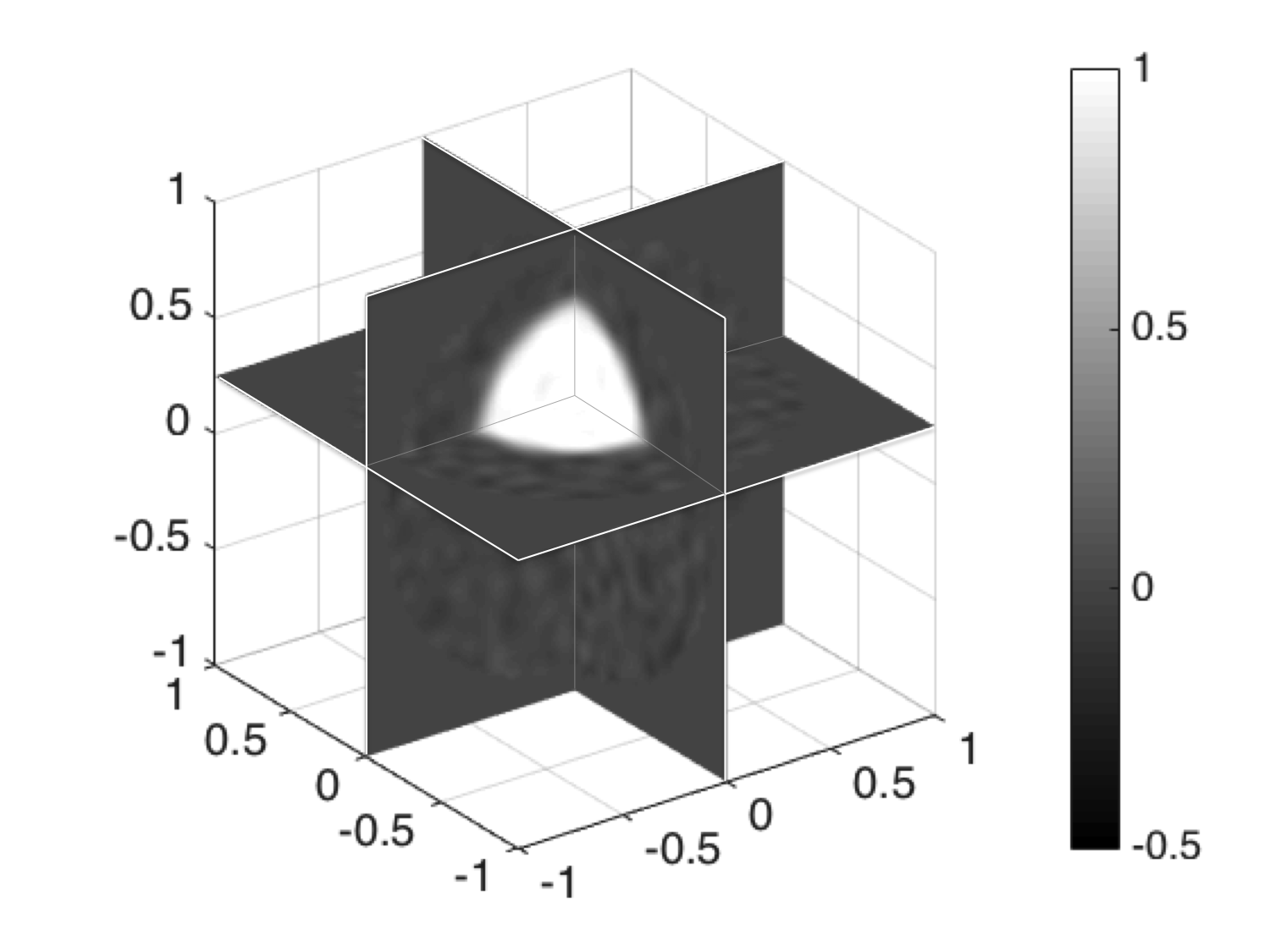}
        \end{subfigure}
       \caption{The 3D ball phantom with radius 0.5, center (0,0,0.25) and unit density (left), and $90 \times 90$ image reconstructed via \eqref{ConeInversion3Dk2} from weighted cone data contaminated with $5\%$ Gaussian white noise (right). The dimensions of the cone projections are taken as in Fig. \ref{fig:3Dk2}. The cross sections by the planes $x=0, y=0$ and $z=0.25$ are shown.}
        \label{fig:3Dnoisy}
\end{center}
\end{figure}
\begin{figure}[H]
\begin{center}
          \begin{subfigure}{0.32\textwidth}
                \includegraphics[width=\textwidth]{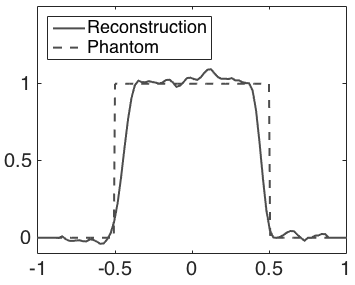}
        \end{subfigure}
  \begin{subfigure}{0.32\textwidth}
                \includegraphics[width=\textwidth]{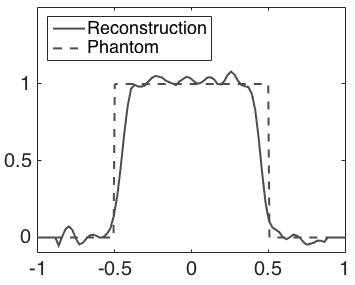}
        \end{subfigure}
  \begin{subfigure}{0.32\textwidth}
                \includegraphics[width=\textwidth]{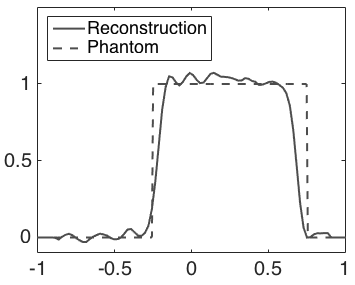}
        \end{subfigure}
        \caption{Comparison of the $x$-axis (left), $y$-axis (center) and $z$-axis (right) profiles of the reconstruction and the phantom given in Fig. \ref{fig:3Dnoisy}.}
       \label{fig:3Dnoisyprofile}
\end{center}
\end{figure}
\subsection{3D Image Reconstruction from Weighted Divergent Beam Data for k=1 and k=2}
In dimension three, when $k=0$, Theorem \ref{T:InviaRadon} reduces to Grangeat's formula \cite{Grangeat}. Here, we provide examples of reconstruction from the weighted divergent beam data for $k=1$ and $k=2$ using a spherical source geometry. For $k=1$, Theorem \ref{T:InviaRadon} gives the following inversion formula:
\begin{align}\label{DivBeamInversion3Dk1}
 f(x) = \frac{1}{8\pi^2} \int_{\S^2} \Big(\frac{\partial^2}{\partial s^2} \int_{\S^2} D^1f(s\beta+y,\sigma)\delta(\sigma \cdot \beta) d\sigma \Big)\Big|_{s=x \cdot \beta} d\beta,
\end{align}
and for $k=2$, we have
\begin{align}\label{DivBeamInversion3Dk2}
 f(x) = \frac{-1}{16\pi^2} \int_{\S^2} \Big(\frac{\partial^3}{\partial s^3} \int_{\S^2} D^2f(s\beta+y,\sigma)\text{sgn}(\sigma \cdot \beta) \Big)\Big|_{s=x \cdot \beta} d\beta.
\end{align}

The forward simulations of weighted divergent beam projections were done numerically using 1800 counts for sources $u$ on the unit sphere, 30K counts for unit directions $\sigma$. For the triangulation of the sphere, we used the algorithm given in \cite{Persson}.

Figure \ref{fig:DivBeam3Dk1} shows the three cross sections of the spherical phantom and of its reconstructions from the weighted divergent beam data obtained via \eqref{DivBeamInversion3Dk1}. The comparison of the phantom and the reconstruction given in Figure \ref{fig:DivBeam3Dk1} in terms of their coordinate axis profiles is provided in Figure \ref{fig:DivBeam3Dprofilek1}.
\begin{figure}[H]
\begin{center}
                 \begin{subfigure}{0.5\textwidth}
                \includegraphics[width=\textwidth]{Phantom3D.png}
        \end{subfigure}
\hspace{-1em}
        \begin{subfigure}{0.5\textwidth}
                \includegraphics[width=\textwidth]{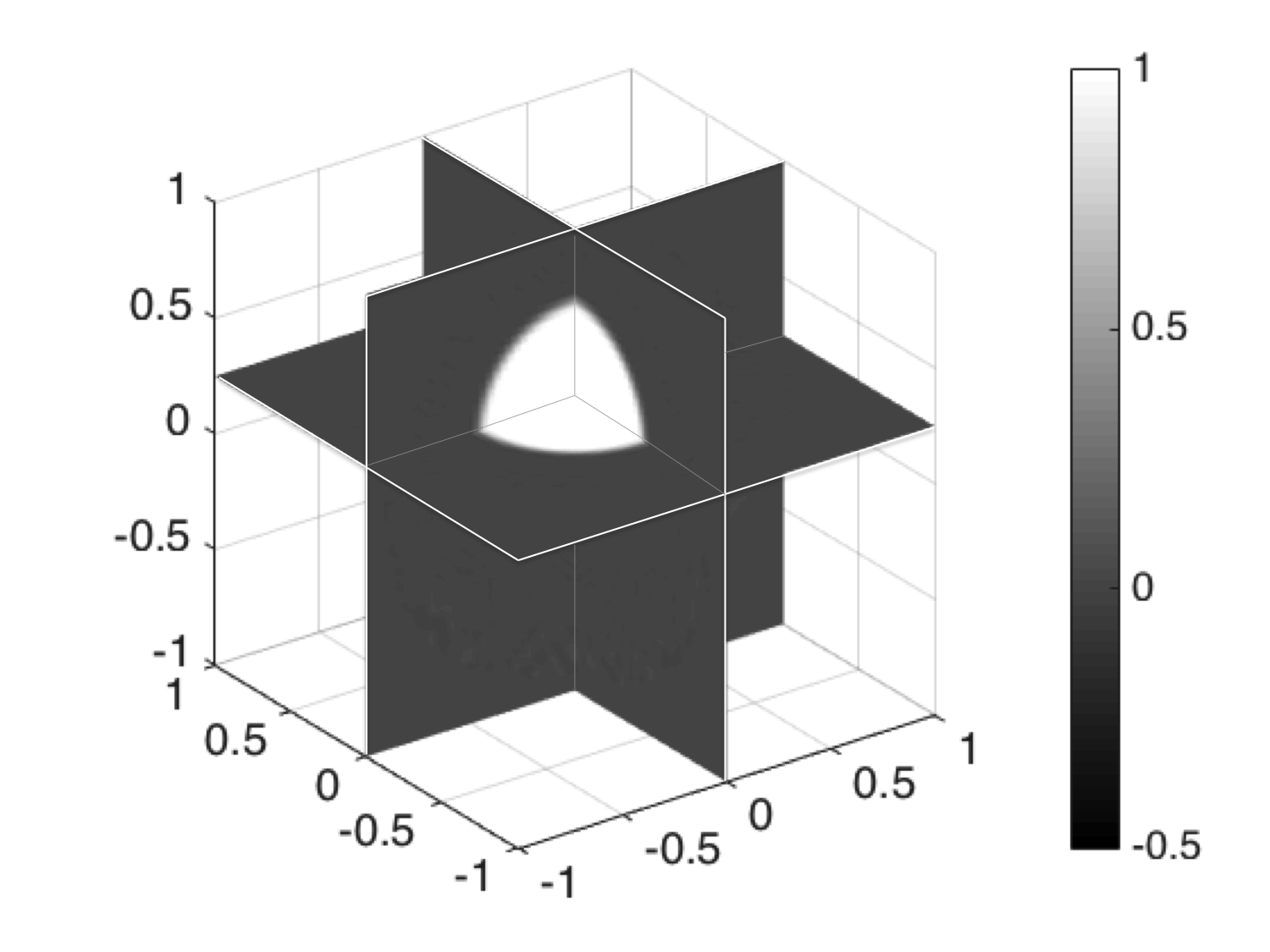}
        \end{subfigure}
       \caption{The 3D ball phantom with radius 0.5, center (0,0,0.25) and unit density (left), and $90 \times 90$ image reconstructed via \eqref{DivBeamInversion3Dk1} from weighted divergent beam data simulated using 1800 counts for sources $u$ on the unit sphere and 30K counts for unit directions $\sigma$ (right). The cross sections by the planes $x=0, y=0$ and $z=0.25$ are shown.}
        \label{fig:DivBeam3Dk1}
\end{center}
\end{figure}
\begin{figure}[H]
\begin{center}
             \begin{subfigure}{0.32\textwidth}
                \includegraphics[width=\textwidth]{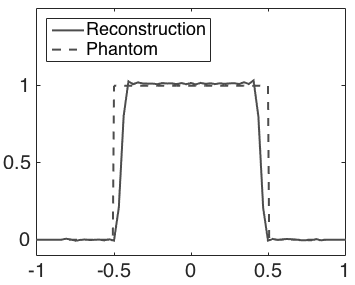}
        \end{subfigure}
  \begin{subfigure}{0.32\textwidth}
                \includegraphics[width=\textwidth]{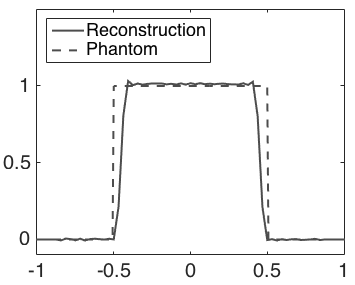}
        \end{subfigure}
  \begin{subfigure}{0.32\textwidth}
                \includegraphics[width=\textwidth]{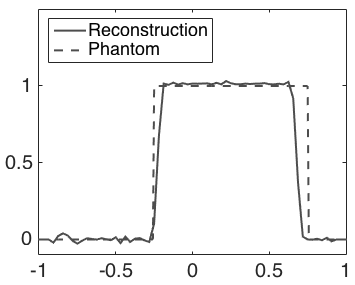}
        \end{subfigure}
        \caption{Comparison of the $x$-axis (left), $y$-axis (center) and $z$-axis (right) profiles of the phantom and the reconstruction given in Fig. \ref{fig:DivBeam3Dk1}.}
       \label{fig:DivBeam3Dprofilek1}
\end{center}
\end{figure}
Figure \ref{fig:DivBeam3Dk2} shows the three cross sections of the spherical phantom and of its reconstructions from the weighted divergent beam data obtained via \eqref{DivBeamInversion3Dk2}. The comparison of the phantom and the reconstruction given in Figure \ref{fig:DivBeam3Dk2} in terms of their coordinate axis profiles is provided in Figure \ref{fig:DivBeam3Dprofilek2}.
\begin{figure}[H]
\begin{center}
                 \begin{subfigure}{0.5\textwidth}
                \includegraphics[width=\textwidth]{Phantom3D.png}
        \end{subfigure}
\hspace{-1em}
        \begin{subfigure}{0.5\textwidth}
                \includegraphics[width=\textwidth]{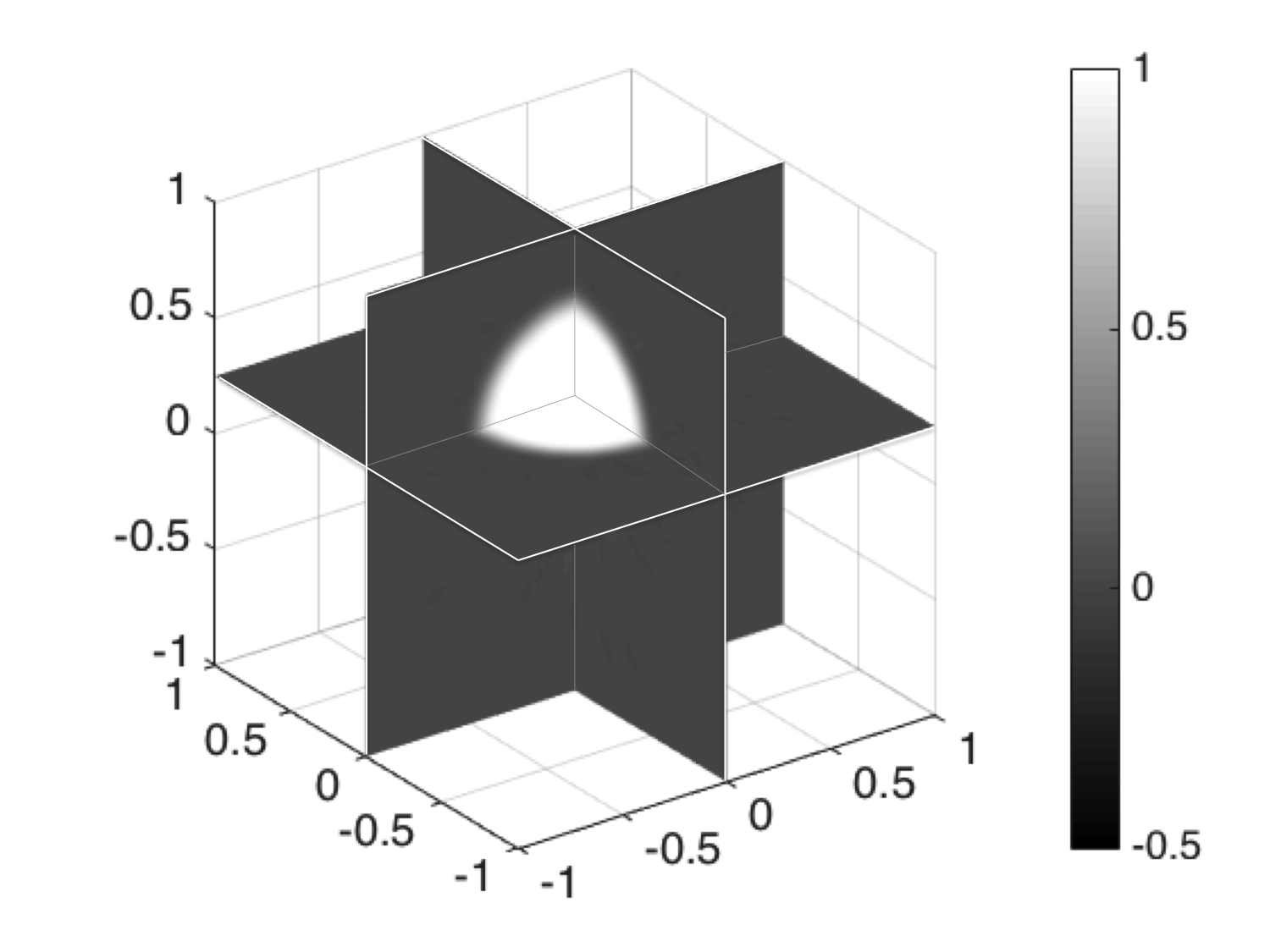}
        \end{subfigure}
       \caption{The 3D ball phantom with radius 0.5, center (0,0,0.25) and unit density (left), and $90 \times 90$ image reconstructed via \eqref{DivBeamInversion3Dk2} from weighted divergent beam data simulated using 1800 counts for sources $u$ on the unit sphere and 30K counts for unit directions $\sigma$ (right). The cross sections by the planes $x=0, y=0$ and $z=0.25$ are shown.}
        \label{fig:DivBeam3Dk2}
\end{center}
\end{figure}
\begin{figure}[H]
\begin{center}
             \begin{subfigure}{0.32\textwidth}
                \includegraphics[width=\textwidth]{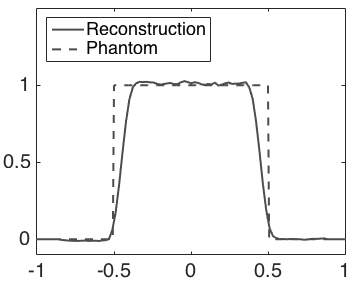}
        \end{subfigure}
  \begin{subfigure}{0.32\textwidth}
                \includegraphics[width=\textwidth]{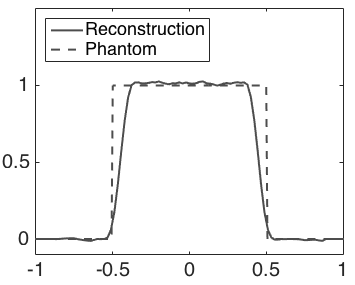}
        \end{subfigure}
  \begin{subfigure}{0.32\textwidth}
                \includegraphics[width=\textwidth]{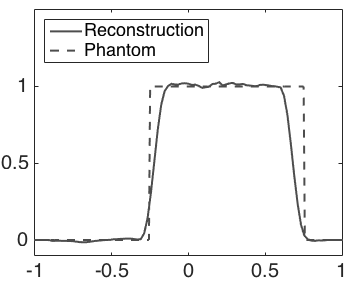}
        \end{subfigure}
        \caption{Comparison of the $x$-axis (left), $y$-axis (center) and $z$-axis (right) profiles of the reconstruction and the phantom given in Fig. \ref{fig:DivBeam3Dk2}.}
       \label{fig:DivBeam3Dprofilek2}
\end{center}
\end{figure}
\section{Conclusions and Remarks}\label{S:remarks}
\begin{itemize}
\item In this paper, we present several novel inversion formulas for the weighted divergent beam and cone transforms. In most combinations of the weights and dimensions, such formulas have not been known before. Even in the non-weighted case, the formulas are different from developed previously, in particular in \cite{Terzioglu,KuchTer}. The main (but not the only one) trigger for such studies is applicability to Compton camera imaging.
\item One of the most important features, in the authors' view, is that the new formulas are adjustable to a wide variety of (detector) geometries. We introduce the class of such geometries satisfying what we call in the text the \emph{Tuy's condition} (its weaker form was called \emph{admissibility} in \cite{KuchTer,Terzioglu}). Most of the previously derived formulas required very symmetric geometries, allowing for harmonic analysis tools to be used.
\item The latter remark is related to the important issue of understanding the geometries that allow for (stable) reconstruction. They deserve a much more thorough study, which we plan to address in another publication.
\item As it was mentioned in the introduction, to avoid being distracted from the main purpose of this text, we assume that the functions to be reconstructed belong to the Schwartz space $\mathcal{S}$. However, as in the case of Radon transform (see, e.g. \cite{Natt_old,KuchCBMS}), the results undoubtedly have a much wider area of applicability, since the derived formulas can be extended by continuity to some wider function spaces. Although we do not do this in the current text, this conclusion is confirmed, in particular, by our successful numerical implementations for discontinuous (piecewise continuous) phantoms. The issues of appropriate function spaces will be addressed elsewhere.
\item Practical soundness of the derived inversion techniques is shown by their numerical implementation in the most interesting dimensions two and three. One should also notice, that the new algorithm of the $3D$ cone transform inversion works much faster than some of the ones developed in \cite{KuchTer}. The reason is that a much coarser mesh (1.8K nodes) on the sphere suffices, rather than $30K$ used in \cite{KuchTer}.
\end{itemize}

\section*{Acknowledgments} The authors thank the NSF for the support and Numerical Analysis and Scientific Computing group of the Department of Mathematics at Texas A$\&$M University for letting the authors use their computing resources.

\medskip
Received xxxx 20xx; revised xxxx 20xx.
\medskip

\end{document}